%% file: KL_DN_final.tex
\documentclass[a4paper,11pt]{amsart}
\usepackage{amssymb,amsmath,amsthm}
\usepackage[normalem]{ulem}
\usepackage{fullpage}
\usepackage{color}
\definecolor{darkblue}{rgb}{0.2,0.2,0.6}
\definecolor{darkred}{rgb}{0.6,0.1,0.1}
\usepackage[colorlinks=true,linkcolor=darkblue, citecolor=darkred]{hyperref}
\usepackage{eucal, beton, lmodern}
\usepackage{stackrel, enumerate, eurosym}
\usepackage{graphicx, bbm, tcolorbox}
\DeclareMathOperator*{\curl}{curl}

\newcommand{\eg}{\emph{e.g.}}

\input commands.tex

\usepackage[left=2.7cm, right=2.7cm, marginparwidth=2cm, textheight = 24cm ]{geometry}
\usepackage[normalem]{ulem}
\definecolor{DarkGreen}{rgb}{0,0.5,0.1}

\definecolor{DarkBlue}{rgb}{0,0.1,0.5}

\newcommand\soutD{\bgroup\markoverwith
	{\textcolor{DarkGreen}{\rule[.5ex]{2pt}{1pt}}}\ULon}
\newcommand\soutP{\bgroup\markoverwith
	{\textcolor{blue}{\rule[.5ex]{2pt}{1pt}}}\ULon}
\newcommand{\Hm}[1]{\leavevmode{\marginpar{\tiny%
			$\hbox to 0mm{\hspace*{-0.5mm}$\leftarrow$\hss}%
			\vcenter{\vrule depth 0.1mm height 0.1mm width \the\marginparwidth}%
			\hbox to
			0mm{\hss$\rightarrow$\hspace*{-0.5mm}}$\\
			\relax\raggedright #1}}}

\newcommand{\bA}{\mathbf{A}}
\newcommand{\R}{\mathbb{R}}

\begin{document}

\title[Isoperimetric inequalities for magnetic Steklov problem]{Isoperimetric inequalities for the lowest magnetic Steklov eigenvalue}

\author[A.~Kachmar]{Ayman Kachmar}
\address{A. Kachmar\newline 
Department of Mathematics {\normalfont and} PDE Research Unit--Center for Advanced Mathematical Sciences\newline
American University of Beirut, P.O.Box 11-0236
Riad El-Solh, Beirut 1107 2020, Lebanon}

\email{ak292@aub.edu.lb}
\author[V.~Lotoreichik]{Vladimir Lotoreichik}
\address{V.~Lotoreichik\newline
	Department of Theoretical Physics\newline
	Nuclear Physics Institute, Czech Academy of Sciences, 
	25068, \v{R}e\v{z}, Czech Republic
}
\email{lotoreichik@ujf.cas.cz}
\subjclass{35P15, 49R05, 51M16}

\begin{abstract}
This paper studies the optimization of the lowest eigenvalue of the magnetic Steklov problem on planar domains. In the bounded domain setting and for magnetic fields of moderate strengths, we prove that among all simply-connected smooth domains of given area, the disk maximises the lowest magnetic Steklov eigenvalue. For exterior domains, we establish a similar isoperimetric inequality for magnetic fields of moderate strength under fixed perimeter constraint and additional geometric
and symmetry
assumptions. The proofs rely on the method of torsion-type trial functions in the bounded domain case and on the method of trial functions dependent only on the distance to the boundary in the exterior domain case.
\end{abstract}

\keywords{Isoperimetric inequality, torsion function, outer parallel curves, Dirichlet-to-Neumann operator, spectral shape optimization, magnetic Steklov problem}

\maketitle

\section{Introduction}

The Steklov eigenvalue problem, where the spectral parameter appears in the boundary condition, has been extensively studied in spectral geometry (see the review paper~\cite{GP17} and also~\cite[Chapter 7]{LMP}). Recently, its magnetic counterpart where the Laplacian is replaced by the magnetic Laplacian has attracted considerable attention (see \cite{CGH, EO, HN-e, HN, HKN}).

For bounded domains, optimization of spectral quantities under geometric constraints is a classical topic. The celebrated Faber--Krahn inequality states that among domains of fixed volume, the first Dirichlet Laplacian eigenvalue is minimized by the ball. Analogous isoperimetric inequalities have been established for the
first non-trivial Steklov eigenvalue without magnetic field~\cite{B01, BFNT21,W54}. In the magnetic setting, however, the interplay between geometry and the magnetic potential introduces new challenges. For the lowest eigenvalue of the magnetic Dirichlet Laplacian with homogeneous magnetic field on a planar domain an analogue of the Faber--Krahn inequality is proved in~\cite{E96}.  Recent works such as \cite{CPS, CLPS} and \cite{KL22, KL24} have developed techniques for handling shape optimization in magnetic spectral problems with Neumann and also Robin boundary conditions, including the technique of torsion-type trial functions. 

Spectral isoperimetric inequalities in exterior domains have been first studied in~\cite{KL18, KL20,KrL24} and in~\cite{B25} for the Robin Laplacian. More recent contributions address the magnetic Neumann eigenvalue problem~\cite{KLS} and the (non-magnetic) Steklov problem~\cite{BGGLP25} on exterior domains. In the magnetic setting, the Steklov problem on exterior domains has been studied recently in \cite{HN-e,HKN}, primarily in the context of spectral asymptotics with respect to the magnetic field strength.

Unlike in the non-magnetic case, the presence of the magnetic field makes the lowest Steklov eigenvalue to be non-zero and dependent on the shape of the domain. Optimization of the lowest magnetic Steklov eigenvalue in the case of the Aharonov-Bohm magnetic field was addressed in~\cite{CPS}. In this paper, we establish  isoperimetric inequalities for the lowest magnetic Steklov eigenvalue with homogeneous magnetic field in two geometric settings, showing that under moderate magnetic fields, the disk emerges as the optimal shape for maximizing the lowest magnetic Steklov eigenvalue in both bounded and exterior settings.

Firstly, for planar bounded simply-connected smooth domains of given area, we prove that the disk maximizes the eigenvalue provided the magnetic field strength is below a certain threshold that guarantees radial ground states for the disk. This result is stated as Theorem~\ref{prop:iso-DN}. It takes the form of a quantitative isoperimetric inequality and, as a corollary, implies that among all domains of fixed perimeter, the disk also maximizes the eigenvalue. This result is proved using the technique of torsion-type trial functions, that is, trial functions constant on the level lines of the torsion function. We employ in the proof some of the ideas developed in~\cite{CLPS} and also in~\cite{KL24} for the optimization of the lowest eigenvalue of the magnetic Neumann Laplacian. 

Secondly, for exterior domains (the complement of a planar bounded simply connected smooth region), we establish that among domains with a given perimeter and satisfying certain symmetry and geometric conditions (such as central symmetry  and a connectedness condition on outer parallel curves), the exterior of the disk maximizes the lowest magnetic Steklov eigenvalue. This holds when the magnetic field strength is below another critical constant guaranteeing a radial and real-valued ground state for the exterior of the disk. The assumption on the outer parallel curves is automatically satisfied in the exterior of a bounded convex domain. This result is stated as Theorem~\ref{thm:iso-fixed-L}.  In the proof of this result we use trial functions, which depend only on the distance to the boundary. This technique was pioneered in~\cite{PW61} and was employed later in many papers; see \eg~\cite{AFK17, FK15} for the Robin Laplacian with negative boundary parameter and \cite{KL22} for the magnetic Robin Laplacian.

The paper is organized as follows. Section~\ref{sec:bounded} treats bounded domains: after introducing the magnetic Steklov eigenvalue and analyzing the case of the disk, we develop the method of torsion-type trial functions and prove the isoperimetric inequality for fixed area. Section~\ref{sec:exterior} is devoted to exterior domains, where we establish a corresponding inequality under fixed perimeter and symmetry assumptions. For completeness, we also include appendices on the existence of ground states and the variational characterization of the lowest magnetic Dirichlet-to-Neumann eigenvalue.

\section{Bounded domains}\label{sec:bounded}
The aim of this section is to treat optimization of the lowest magnetic Steklov eigenvalue on a bounded planar domain. A central tool is the method of torsion-type trial functions, which reduces the problem to an auxiliary one-dimensional spectral problem. The behavior of this auxiliary problem under perturbations of a geometric weight function is analyzed in detail, showing that the functional involved is monotonic with respect to the weight.
\subsection{The lowest magnetic Steklov eigenvalue}
In this subsection, we will introduce the lowest magnetic Steklov eigenvalue on a bounded planar domain through its variational characterisation.
Suppose that $\Omega\subset\dR^2$ is a bounded, simply-connected domain with $C^\infty$-smooth boundary $\p\Omg$.  We denote the area of $\Omega$ by $|\Omega|$, and the perimeter of $\partial\Omega$ by $|\partial\Omega|$. By $\nu$ we denote the outer unit normal vector to the boundary of the domain $\Omg$. We will use the notation $L^2(\Omg)$ and $L^2(\p\Omg)$ for Lebesgue spaces on $\Omg$ and on its boundary $\p\Omg$, respectively. The first-order $L^2$-based Sobolev space on $\Omg$ will be denoted by $H^1(\Omg)$. The closure of $C^\infty_0(\Omg)$ in $H^1(\Omg)$ will be abbreviated by $H^1_0(\Omg)$. Finally, we will occasionally use the notation $H^{1/2}(\p\Omg)$ for the fractional Sobolev of order $\frac12$ on $\p\Omg$ and the notation $H^{-1/2}(\p\Omg)$ for its dual space, respectively.

Consider a vector potential $\bA\colon\Omega\to\R^2$ such that
\[\curl\bA=1.\]
We will make a specific choice $\bA=\bA_\Omega$,  that will prove useful in our analysis.  Let $\psi$ be the unique solution of the system
\begin{equation}\label{eq:torsion} 
\begin{cases}
-\Delta \psi=1, \quad &\mbox{in }\Omega,\\
\psi=0,\quad &\mbox{on }\partial\Omega,
\end{cases}
\end{equation}
in the Sobolev space $H^1_0(\Omg)$
and we set
\[\bA=\bA_\Omega:=(\partial_2\psi,-\partial_1\psi). \]
For a given $b>0$,  we introduce the lowest magnetic Steklov eigenvalue,
\begin{equation}\label{eq:def-ev}
	\lambda(b,\Omega):=\inf_{\substack{u\in H^1(\Omega)\\ u|_{\partial\Omega}\not=0}}\frac{\|(-\ii\nabla-b\bA)u\|^2_{L^2(\Omega;\dC^2)}}{\|u|_{\p\Omg}\|_{L^2(\partial\Omega)}^2},
\end{equation}
where $u|_{\p\Omg}$ stands for the trace of $u$ on $\p\Omg$.
\begin{remark}
The value $\lm(b,\Omg)$ can viewed as the lowest eigenvalue of the \emph{magnetic Dirichlet-to-Neumann map} on $\Omg$. Let us provide the definition of this operator. Recall that by~\cite[Lemmata 2.2 and 2.4, Definition 2.3]{BR12},
the \emph{Poisson operator} $\sfS_b\colon H^{1/2}(\p\Omg)\arr H^1(\Omg)$ defined by the properties 
\[
	(\sfS_b \varphi)|_{\p\Omg} = 
	\varphi,\qquad (-\ii\nb -b{\bf A})^2\sfS_b \varphi = 0,\qquad \text{for all}\,\, 
	\varphi\in H^{1/2}(\p\Omg),
\]
is continuous and maps $H^{1/2}(\p\Omg)$ bijectively onto
the space of solutions 
\begin{equation}\label{eq:cN}
	\cN_b:= 
	\big\{u\in H^1(\Omg)\colon (-\ii\nb -b{\bf A})^2u = 0\big\}.
\end{equation}
According to~\cite[Examples 2.1 and 2.2]{EO}, the quadratic form
\begin{equation}\label{eq:frlb}
		\frl_b[\varphi] := \|(-\ii\nb - b{\bf A})\sfS_b 
	\varphi\|^2_{L^2(\Omg;\dC^2)},\qquad \dom\frl_b := H^{1/2}(\p\Omg),
\end{equation}
is closed, non-negative, and densely defined in $L^2(\p\Omg)$.
Thus, the form $\frl_b$ defines by the first representation theorem a unique self-adjoint operator in the Hilbert space $L^2(\p\Omg)$, which we denote by $\Lm_b$. The operator $\Lm_b$ is called the magnetic Dirichlet-to-Neumann map.
Performing the integration by parts, 
we conclude that this operator is characterised by
\[
\begin{aligned}
	\Lm_b 
	\varphi &= \p_\nu\sfS_b \varphi|_{\p\Omg},\\
	\dom\Lm_b &= \big\{\varphi\in H^{1/2}(\p\Omg)\colon \p_\nu\sfS_b 
	\varphi|_{\p\Omg} \in L^2(\p\Omg)\big\},
\end{aligned}	
\]	
where we implicitly used that by~\cite[Section 3]{BR20} for any $u\in \cN_b$ the Neumann trace $\p_\nu u|_{\p\Omg} = \nu\cdot\nb u|_{\p\Omg}
\in H^{-1/2}(\p\Omg)$ is well defined.
Thanks to compactness of the embedding of $H^{1/2}(\p\Omg)$ into $L^2(\p\Omg)$, the spectrum of $\Lm_b$ is purely discrete. By Proposition~\ref{prop:lowest}, the value $\lm(b,\Omg)$ is the lowest eigenvalue of the self-adjoint operator $\Lm_b$.
\end{remark}
Let $u_\star\in H^1(\Omg)$ with $u_\star|_{\p\Omg}\ne 0$ be a function at which the infimum in~\eqref{eq:def-ev} is attained. Then $u_\star|_{\p\Omg}$ is a ground state of the magnetic Dirichlet-to-Neumann map on $\Omg$. With a slight abuse of terminology we also call the function $u_\star$ the ground state.
One can check that this function satisfies the system
\begin{equation}\label{eq:DN-gs}
	\begin{cases}
		\big(-\ii\nabla - b{\bf A}\big)^2u_\star = 0,&\quad\text{in}~\Omg,\\
		\p_\nu u_\star = \lm(b,\Omg) u_\star, &\quad\text{on}\,\p\Omg,
	\end{cases}
\end{equation}
where the Neumann trace $\p_\nu u_\star$ is taken with respect to the outer unit normal to $\Omg$ and is well-defined as the distribution in $H^{-1/2}(\p\Omg)$
(see Appendix~\ref{app:gs}). 
\subsection{The case of the disk}
We will establish in this subsection that for moderate magnetic fields the ground state for the disk is a radial function. To this aim we will employ separation of variables.
Let $\cB$ be a disk centered at the origin.  Denoting by $R$ the radius of $\cB$,  we have $R=\sqrt{|\cB|/\pi}$.   
\begin{prop}\label{prop:radial}
Let $\cB\subset\dR^2$ be the disk of radius $R > 0$.
If $b|\cB| < \pi$, then the infimum in~\eqref{eq:def-ev}
for $\Omg =\cB$ is attained on the radial function
\[u_\star(x)=f_\star(|x|),\]
where $f_\star\in C^\infty([0,R])$ satisfies the system
\begin{equation}\label{eq:Steklov_System}
\begin{cases}
 -f''_\star(r)-\frac1rf'_\star(r)+\frac{b^2r^2}{4}f_\star(r)=0\qquad\mbox{ for }0<r<R,\\
f'_\star(R)=\lambda(b,\cB) f_\star(R).
\end{cases}
\end{equation}
In particular, it holds (for some $A\in\dC\sm\{0\}$)
\[
	f_\star(r) = AI_0\left(\frac{br^2}{4}
\right),\qquad\lm(b,\cB) = \frac{bR}{2}
	\frac{I_1\left(\frac{bR^2}{4}\right)}{I_0\left(\frac{bR^2}{4}\right)}.
\]
\end{prop}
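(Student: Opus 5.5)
The plan is to separate variables in polar coordinates, reduce the variational problem~\eqref{eq:def-ev} on $\cB$ to a family of one-dimensional problems indexed by the angular momentum $m\in\dZ$, and show that under $b|\cB|<\pi$ the mode $m=0$ gives the smallest quotient.

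First I identify the potential. For $\Omg=\cB$ the torsion function is $\psi(x)=\frac{R^2-|x|^2}{4}$. Hence $\bA_\cB=\frac12(-x_2,x_1)$, which is the symmetric gauge. In polar coordinates $(r,\theta)$, for $u=\sum_{m\in\dZ} f_m(r)e^{\ii m\theta}$, orthogonality of the Fourier modes gives
\[
\|(-\ii\nabla-b\bA)u\|^2 = 2\pi\sum_{m}\int_0^R\Big(|f_m'|^2+\Big(\frac{m}{r}-\frac{br}{2}\Big)^2|f_m|^2\Big)r\,\dd r,
\]
\[
\|u|_{\p\cB}\|^2_{L^2(\p\cB)}=2\pi R\sum_m|f_m(R)|^2 .
\]
This is justified for $u\in H^1(\cB)$ by density of smooth functions and Parseval in $\theta$.

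The key step is the pointwise comparison of the potentials. Expanding,
\[
\Big(\frac{m}{r}-\frac{br}{2}\Big)^2=\frac{m^2}{r^2}-bm+\frac{b^2r^2}{4}.
\]
For $m\le -1$ the extra term $\frac{m^2}{r^2}-bm$ is clearly positive. For $m\ge1$ and $0<r\le R$ we have
\[
\frac{m^2}{r^2}-bm\ge m\Big(\frac{m}{R^2}-b\Big)\ge m\Big(\frac{1}{R^2}-b\Big)>0,
\]
because $bR^2=b|\cB|/\pi<1$. Therefore each mode's energy is at least the $m=0$ energy of the same $f_m$. Summing over modes shows the Rayleigh quotient of $u$ is at least
\[
\inf_{f}\frac{\int_0^R\big(|f'|^2+\frac{b^2r^2}{4}|f|^2\big)r\,\dd r}{R|f(R)|^2}.
\]
The reverse inequality is trivial, since radial functions are admissible. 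So $\lambda(b,\cB)$ equals this radial infimum.

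Next I treat the one-dimensional problem. By the existence result of Appendix~\ref{app:gs} a minimizer $u_\star$ exists. Its $m=0$ component is a minimizer as well; this holds because of strict inequality in the other modes, or simply because we have just shown the infimum is attained among radial functions. The Euler--Lagrange equation of the radial quotient gives exactly the ODE in~\eqref{eq:Steklov_System}, together with the boundary condition $f'(R)=\lambda f(R)$, i.e.\ \eqref{eq:DN-gs} restricted to radial functions.

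To solve it, substitute $f(r)=g(br^2/4)$. With $s=br^2/4$ this transforms the ODE into the modified Bessel equation of order $0$, namely $s g''+g'-sg=0$, so $g=AI_0+BK_0$. The $K_0$ branch behaves like $-2\log r$ near $0$, so its gradient is not in $L^2$ near the origin in two dimensions; hence $B=0$ for an $H^1$ function. Since $I_0(br^2/4)$ is a power series in $r^2$, $f_\star\in C^\infty([0,R])$. Finally, $f_\star'(r)=A\frac{br}{2}I_1(br^2/4)$ using $I_0'=I_1$, and the boundary condition yields the stated formula for $\lambda(b,\cB)$.

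I expect the main obstacle to be the rigorous reduction to modes: justifying that the infimum over $H^1(\cB)$ can be computed modewise and that the minimizer's Euler--Lagrange equation holds in the classical sense at $r=0$. I would handle this through the Fourier decomposition in $L^2$, together with elliptic regularity for~\eqref{eq:DN-gs}.
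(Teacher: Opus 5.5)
Your proposal is correct and follows essentially the same route as the paper: a Fourier decomposition in $\theta$, the pointwise comparison $\bigl(\frac{m}{r}-\frac{br}{2}\bigr)^2>\frac{b^2r^2}{4}$ for $m\neq 0$ under $bR^2<1$, and then the $I_0/K_0$ solution with $K_0$ excluded by the $H^1$ condition. The only minor difference is how you obtain the radial minimizer: you take the two-dimensional ground state of Appendix~\ref{app:gs} and use the strict inequality in the modes $m\neq 0$ to show it is radial, whereas the paper invokes the one-dimensional existence result of Appendix~\ref{app:gs-1D} directly; your strict-inequality argument is valid, but drop the alternative phrase ``the infimum is attained among radial functions'', which is circular since you only showed the two infima coincide, not that the radial one is attained.
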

\begin{proof}
Using the explicit representation of the torsion function $\psi(x) = \frac{R^2-|x|^2}{4}$ for the disk, we conclude that the vector potential of the uniform magnetic field
acquires the form ${\bf A}(x) = \frac{b}{2}(-x_2,x_1)^\top$.
	
Any function $u\in H^1(\cB)$ can be uniquely decomposed into the series
\begin{equation}\label{eq:series}
	u(r,\tt) = \sum_{n\in\dZ} f_n(r)e^{\ii n\tt},
\end{equation}
where the measurable functions $f_n\colon (0,R)\arr\dC$ for $n\in\dZ$ satisfy the conditions
\begin{myenum}
	\item [\rm (a)] $\sum_{n\in\dZ} \int_0^R|f_n(r)|^2r\dd r < \infty;
	$ \item  [\rm (b)]  $\sum_{n\in\dZ}\int_0^R\left(|f_n'(r)|^2+\frac{n^2}{r^2}|f_n(r)|^2\right)r\dd r < \infty$.
\end{myenum}
Moreover, for any measurable functions $f_n\colon(0,R)\arr\dC$, $n\in\dZ$, satisfying the above conditions (a) and (b), the series in~\eqref{eq:series} converges to a function in the Sobolev space $H^1(\cB)$.
Plugging the function $u$, represented as the series~\eqref{eq:series}, into the Rayleigh quotient in~\eqref{eq:def-ev}, we eventually get
\begin{equation}\label{eq:characterisation}
\begin{aligned}
	\lm(b,\cB) &= \inf_{\begin{smallmatrix}
			u\in H^1(\cB)\\ u|_{\p\cB}\ne 0\end{smallmatrix}}\frac{\|(-\ii\nabla - b{\bf A})u\|^2_{L^2(\cB;\dC^2)}}{\|u|_{\p\cB}\|^2_{L^2(\p\cB)}}\\
	&=
	\inf_{n\in\dZ}\inf_{\begin{smallmatrix}
			f\colon f,f',n r^{-1}f\in 
			L^2((0,R);r\dd r)\\
			 f(R)\ne 0
		\end{smallmatrix}}	
	\frac{\displaystyle\int_0^R\left(|f'(r)|^2 + \left(\frac{n}{r}-\frac{br}{2}\right)^2|f(r)|^2\right)r \dd r}{R|f(R)|^2},
\end{aligned}		
\end{equation}
where $L^2((0,R);r\dd r)$ stands for the weighted $L^2$-space on $(0,R)$ with linear weight.
Consider the scalar potential on the interval $(0,R)$,
\[
v_n(r)=\left(\frac{n}{r}-\frac{br}{2}\right)^2,\qquad n\in\dZ.
\]
Under the assumption $b|\cB| < \pi$, which is equivalent to $b R^2 < 1$, the inequality $v_0 < v_n$ holds pointwise for all $n\in\dZ\sm\{0\}$. Hence, we get that the outer infimum in the characterisation above is attained for $n = 0$. Let $f_\star \in H^1((0,R);r\dd r)$ be the corresponding minimizing function for the inner infimum for $n = 0$, which exists 
and satisfies the system~\eqref{eq:Steklov_System}; see Appendix~\ref{app:gs-1D}. Then, we get that the infimum in~\eqref{eq:def-ev} for $\Omg = \cB$ is attained
on the radial function $u_\star(x) = f_\star(|x|)$. 

The general solution of the differential equation in the system~\eqref{eq:Steklov_System}
has the form
\[
	f(r) = AI_0\left(\frac{br^2}{4}\right) + BK_0 \left(\frac{br^2}{4}\right),\qquad A,B\in\dC.
\] 
Taking into account that $f_\star\in H^1((0,R);r\dd r)$ we get that $B = 0$ in view of properties of the modified Bessel function $K_0$ (see~\cite[10.29.3 and 10.30.2]{NIST}). Hence, we conclude that $f_\star(r) = AI_0\left(\frac{br^2}{4}\right)$ and,  as a direct consequence, we also get that
\[
	\lm(b,\cB) = \frac{f_\star'(R)}{f_\star(R)}= \frac{bR}{2}
	\frac{I_1\left(\frac{bR^2}{4}\right)}{I_0\left(\frac{bR^2}{4}\right)},
\]
where we used~\cite[10.29.3]{NIST}.
\end{proof}
The condition $b|\cB| <\pi$ is only sufficient for the ground state of the disk to be radial.
\begin{dfn}\label{def-criti-b}
We define the constant
\[ b_\star =\sup\{b>0\colon \lambda(b',\cB_1)\mbox{ has a radial ground state for all } b'\in(0,b)\},\]
where $\cB_1\subset\dR^2$ stands for the disk of the unit radius. 
\end{dfn}
This definition makes sense in light of Proposition~\ref{prop:radial},  and we have $b_\star\geq 1$.
\begin{remark}\label{rem:scaling}	
Using the unitary scaling transform $\sfU\colon L^2(\cB_1)\arr L^2(\cB_R)$
defined as
$(\sfU u)(x):= \frac{1}{R} u(\frac{x}{R})$, we get that,
for a general disk $\cB =\cB_R\subset\dR^2$ of radius $R >0$, the lowest magnetic Steklov eigenvalue $\lm(b,\cB)$ has a radial ground state provided that $b|\cB| < \pi b_\star$. In our analysis, we do not exclude that for larger values of $b$ the ground state of $\lm(b,\cB)$ becomes radial again, although the numerics in~\cite[Fig. 1]{HN} suggests that it is not the case. 
\end{remark}
\subsection{Method of torsion-type trial functions}
In this subsection, we apply the method of torsion-type trial functions to the magnetic Steklov problem. The technique was proposed simultaneously and independently in~\cite{KL24} and~\cite{CLPS}. 
We follow closely the approach in~\cite{CLPS}, adapting it to the Steklov problem.  

We introduce an auxiliary one-dimensional Steklov-type spectral problem, whose lowest eigenvalue provides an upper bound on the lowest magnetic Steklov eigenvalue on a general domain. This bound is an essential ingredient in proving an isoperimetric inequality for the magnetic Steklov problem with moderate magnetic fields.

Recall that $\psi$ stands for the torsion function of the domain $\Omg$ introduced as the unique solution of the system~\eqref{eq:torsion}. For the sake of brevity, the dependence of $\psi$ on the domain $\Omg$ is not indicated. 
Since the boundary of the domain $\Omg$ is $C^\infty$-smooth, standard elliptic regularity yields that $\psi \in C^\infty(\ov\Omg)$. Moreover, by the strong maximum principle, we get that $\psi > 0$ in $\Omg$. It is also argued in~\cite{CLPS} that $\psi$ is real-analytic in $\Omg$ and therefore it follows from~\cite[Proposition 1]{M20} applied to $|\nb\psi|^2$ that the set of its critical points $\{x\in\Omg\colon\nb\psi(x) = 0\}$ has Lebesgue measure zero.

Let $t_\star :=\max_{\overline\Omega}\psi$ and for all $t\in[0,t_\star]$,  we introduce the level and super-level sets 
\[ \Gamma_t:=\{x\in\overline\Omega\colon \psi(x)=t\},\qquad \Omega_t:=\{x\in \Omega\colon \psi(x)>t\}.\]
By Sard's theorem~\cite[Theorem 6.10]{Lee} and the implicit function theorem~\cite[Theorem 3.3.1]{KP02}, the curve $\G_t$ is $C^\infty$-smooth for almost all $t\in [0,t_\star]$.
We introduce the distribution function of $\psi$ by
\[ \mu(t):=|\Omega_t|,\]
which is right-continuous and strictly decreasing.
By~\cite[Corollary 2.2]{CLPS}
the function
\[
	\gamma_\Omg\colon[0,t_\star]\arr[0,\infty],\qquad\gamma_\Omg(t) := \int_{\G_t}\frac{1}{|\nb\psi(x)|}\dd\cH^1(x)
\]
is well defined almost everywhere and belongs to the space $L^1(0,t_\star)$; here $\cH^1$ stands for the one-dimensional Hausdorff measure. 
We introduce the parameter
\[
	a^\star =\mu(0)=|\Omega| 
\]
and for a given $a\in[0,a^\star]$, we define the value 
	\[ G_\Omega(a)=\int_{\Gamma_t}\frac{1}{|\nabla\psi(x)|}\dd\cH^1(x)\quad\mbox{ where }a=\mu(t), ~ t\in[0,t_\star].\]
In particular, at the endpoint $a = a^\star$,
\[G_\Omega(a^\star)=\int_{\partial\Omega}\frac{1}{|\nabla\psi(x)|}\dd\cH^1(x).\]
\begin{lem}[{\cite[Proposition 2.7]{CLPS}}]
	For almost every $a\in [0,a^\star]$ the inequality $G_\Omg(a) \ge 4\pi$ holds. The equality $G_\Omg(a) = 4\pi$ holds for almost every $a$ if, and only if, $\Omg$ is a disk.
\end{lem}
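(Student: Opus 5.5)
We combine the coarea formula, the equation $-\Delta\psi=1$, and the isoperimetric inequality on the super-level sets $\Omega_t$. The key relation is $\mu'(t)=-\gamma_\Omega(t)$ for almost every $t$.

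\emph{Step 1: the derivative of $\mu$.} Since the critical set of $\psi$ has measure zero, the coarea formula gives
\[
\mu(t)=|\Omega_t|=\int_t^{t_\star}\int_{\Gamma_s}\frac{1}{|\nabla\psi|}\,\mathrm{d}\mathcal H^1\,\mathrm{d}s .
\]
Hence $\mu$ is absolutely continuous and $-\mu'(t)=\gamma_\Omega(t)$ for almost every $t$.

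\emph{Step 2: a flux identity.} For $t$ a regular value, $\Gamma_t=\partial\Omega_t$ is smooth and $\nabla\psi$ points inward along it. Integrating $-\Delta\psi=1$ over $\Omega_t$ gives
\[
\mu(t)=\int_{\Gamma_t}|\nabla\psi|\,\mathrm{d}\mathcal H^1 .
\]

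\emph{Step 3: Cauchy--Schwarz and the isoperimetric inequality.} By Cauchy--Schwarz,
\[
|\Gamma_t|^2\le \int_{\Gamma_t}|\nabla\psi|\,\mathrm{d}\mathcal H^1\cdot\int_{\Gamma_t}\frac{1}{|\nabla\psi|}\,\mathrm{d}\mathcal H^1=\mu(t)\,\gamma_\Omega(t).
\]
The planar isoperimetric inequality gives $|\Gamma_t|^2\ge 4\pi|\Omega_t|=4\pi\mu(t)$. Combining the two, $\gamma_\Omega(t)\ge 4\pi$ for almost every $t$. Since $\mu$ is strictly decreasing and absolutely continuous, the change of variables $a=\mu(t)$ maps null sets of $t$ to null sets of $a$. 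Therefore $G_\Omega(a)\ge 4\pi$ for almost every $a\in[0,a^\star]$.

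\emph{Step 4: the equality case.} If $\Omega$ is a disk, then $\psi=(R^2-|x|^2)/4$ and one computes $G\equiv 4\pi$ directly.

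Conversely, suppose $G_\Omega=4\pi$ almost everywhere. Then equality holds in the isoperimetric inequality for almost every $t$, so almost every super-level set $\Omega_t$ is a disk. Equality also holds in Cauchy--Schwarz, so $|\nabla\psi|$ is constant on almost every $\Gamma_t$.

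We then pass to $t\to0^+$. Since $\psi\in C^\infty(\overline\Omega)$ and $\nabla\psi\neq0$ near $\partial\Omega$ by the Hopf lemma, the level curves $\Gamma_t$ converge smoothly to $\partial\Omega$. A smooth limit of circles is a circle, so $\partial\Omega$ is a circle and $\Omega$ is a disk. Alternatively, one can argue that $|\nabla\psi|$ is constant on $\partial\Omega$ and invoke Serrin's theorem.

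\emph{Main obstacle.} The delicate point is the measure-theoretic bookkeeping. One must justify the coarea formula and the transfer of the almost-everywhere statements from the variable $t$ to the variable $a=\mu(t)$. This requires that the set of critical values has measure zero, which follows from Sard's theorem, and that the critical set has Lebesgue measure zero, which ensures $\mu$ has no jumps.
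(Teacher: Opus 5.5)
Your proposal is correct and is essentially the standard argument behind \cite[Proposition 2.7]{CLPS}, which the paper quotes without giving its own proof. That argument combines the flux identity $\int_{\Gamma_t}|\nabla\psi|\,\mathrm{d}\mathcal H^1=\mu(t)$, Cauchy--Schwarz and the isoperimetric inequality on $\Omega_t$, with the equality case forcing the super-level sets to be disks. The one step you leave implicit is transferring null sets back from $a$ to $t$ in the converse direction; it follows because $\mu(s)-\mu(t)=\int_s^t\gamma_\Omega\ge 4\pi(t-s)$ for $s<t$, so $\mu^{-1}$ is Lipschitz.
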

In order to state the next proposition we introduce the function space
\begin{equation}\label{key}
	\cF_\Omega:=\big\{ f\in L^2(0,a^\star)\colon f'\in  L^2((0,a^\star);aG_\Omega(a)\dd a)   \big\}.
\end{equation}

We have the following proposition, which is a variation of \cite[Proposition 2.11]{CLPS} adapted to the magnetic Steklov problem.
\begin{prop}\label{prop:ub}
For any $b > 0$, we have
\[ \lambda(b,\Omega)\leq \frac1{|\partial\Omega|}\kappa_1(b,G_\Omega),\]
where
\begin{equation}\label{eq:kappa}
	\kappa_1(b,G_\Omega)=\inf_{\begin{smallmatrix} f\in\cF_\Omg\\ f(a^\star)\not=0\end{smallmatrix}}\frac{\displaystyle\int_0^{a^\star}\Bigl(aG_\Omega(a)|f'(a)|^2+\frac{b^2a}{G_\Omega(a)}|f(a)|^2 \Bigr)\dd a}{|f(a^\star)|^2}.
\end{equation}
\end{prop}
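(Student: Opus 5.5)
The plan is to insert into the Rayleigh quotient~\eqref{eq:def-ev} a torsion-type trial function $u(x) := f(\mu(\psi(x)))$ with $f\in\cF_\Omg$. Such a $u$ is constant on every level line $\G_t$. Since $\psi=0$ on $\p\Omg$ and $\mu(0)=a^\star$, the trace is the constant $u|_{\p\Omg}=f(a^\star)$. The denominator is therefore exactly $|\p\Omg|\,|f(a^\star)|^2$, which produces the prefactor $1/|\p\Omg|$. It then remains to show that the numerator equals the numerator in~\eqref{eq:kappa}; taking the infimum over $f$ finishes the proof.

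\emph{Step 1: pointwise expansion.} We expand
\[
|(-\ii\nb-b\bA)u|^2=|\nb u|^2+b^2|\bA|^2|u|^2-2b\,\mathrm{Im}\big(\ov u\,\nb u\cdot\bA\big).
\]
Since $\nb u = f'(\mu(\psi))\,\mu'(\psi)\,\nb\psi$ is parallel to $\nb\psi$, and $\bA=(\p_2\psi,-\p_1\psi)$ is orthogonal to $\nb\psi$, the cross term vanishes identically. This is the reason for the choice of gauge $\bA_\Omg$. Moreover $|\bA|^2=|\nb\psi|^2$. Hence
\[
|(-\ii\nb-b\bA)u|^2=|\nb\psi|^2\Big(|f'(\mu(\psi))|^2\mu'(\psi)^2+b^2|f(\mu(\psi))|^2\Big).
\]

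\emph{Step 2: coarea and the two geometric identities.} By the coarea formula,
\[
\int_\Omg g(\psi)|\nb\psi|^2\dd x=\int_0^{t_\star} g(t)\int_{\G_t}|\nb\psi|\dd\cH^1\dd t.
\]
The divergence theorem applied to $-\Delta\psi=1$ on $\Omg_t$, whose outer normal derivative is $-|\nb\psi|$ on $\G_t$, gives $\int_{\G_t}|\nb\psi|\dd\cH^1=\mu(t)$. Furthermore, $\mu'(t)=-\gamma_\Omg(t)$ for a.e.\ $t$. This uses that the critical set of $\psi$ has measure zero, so that $\mu$ is absolutely continuous; this is the content of \cite[Corollary 2.2]{CLPS}.

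\emph{Step 3: change of variables.} Combining Steps 1 and 2, the numerator of the Rayleigh quotient becomes
\[
\int_0^{t_\star}\Big(|f'(\mu(t))|^2\gamma_\Omg(t)^2+b^2|f(\mu(t))|^2\Big)\mu(t)\dd t.
\]
We now substitute $a=\mu(t)$, so that $\dd a=-\gamma_\Omg(t)\dd t$ and $G_\Omg(a)=\gamma_\Omg(t)$. The two terms become $\int_0^{a^\star} aG_\Omg(a)|f'(a)|^2\dd a$ and $\int_0^{a^\star} \frac{b^2a}{G_\Omg(a)}|f(a)|^2\dd a$. These are exactly the numerator in~\eqref{eq:kappa}.

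The main obstacle is technical: we must verify that $u\in H^1(\Omg)$ for every $f\in\cF_\Omg$ and justify the change of variables when $\mu$ is merely monotone and absolutely continuous. Here $G_\Omg$ is only in $L^1$ and possibly unbounded near $a=0$, where $\nb\psi$ may vanish at the maximum. I would first carry out the computation for $f\in C^1([0,a^\star])$. In that case $u$ is Lipschitz, since $|\nb u|\le \|f'\|_\infty\,\gamma_\Omg(\psi)|\nb\psi|$, and its integrability follows from the same coarea computation. I would then conclude by approximation in the weighted norm defining $\cF_\Omg$ as in \cite[Proposition 2.11]{CLPS}, using the lower bound $G_\Omg\ge 4\pi$ to control the term $\frac{b^2a}{G_\Omg(a)}|f(a)|^2$ by $\|f\|_{L^2}^2$. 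Alternatively, it suffices to note that the infimum in~\eqref{eq:kappa} may be taken over this dense class of smooth $f$.
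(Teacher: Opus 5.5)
Your proposal is correct and follows the paper's proof. It uses the same torsion-type trial function $u=f(\mu(\psi))$ and the same constant trace $f(a^\star)$ producing the factor $|\p\Omg|$. The only difference is that the paper quotes the energy identity from \cite[Eq.~(27)]{CLPS}, while you derive it via orthogonality of $\nb u$ and $\bA_\Omg$, $\int_{\G_t}|\nb\psi|\dd\cH^1=\mu(t)$, $\mu'=-\gamma_\Omg$ a.e., and the substitution $a=\mu(t)$.

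One correction to your regularity remark: $u$ need not be Lipschitz even for $f\in C^1([0,a^\star])$. The function $\gamma_\Omg$ can blow up at interior critical levels of $\psi$, not only near the maximum (e.g.\ logarithmically at the saddle level of a dumbbell-shaped domain), while $|\nb\psi|$ stays bounded below on most of $\G_t$. So the membership $u\in H^1(\Omg)$ for $f\in\cF_\Omg$ should be taken from \cite[Corollary 2.10]{CLPS}, as the paper does, rather than from a Lipschitz bound.
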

\begin{proof}
For any $f\in\cF_\Omg$ with $f(a^\star) \ne 0$ we define the function
\[u(x)=f\bigl(\mu(\psi(x)) \bigr), \]
where as before $\psi$ is the torsion function and $\mu$ is its distribution function. By~\cite[Corollary 2.10]{CLPS} we have $u\in H^1(\Omg)$ and moreover $u|_{\p\Omg} \ne 0$.
Thus, we can use the function $u$ as the trial function in~\eqref{eq:def-ev}. By \cite[Eq. (27)]{CLPS}
we get
\[ \|(-\ii\nabla-b\bA)u\|^2_{L^2(\Omega;\dC^2)}=\int_0^{a^\star}\Bigl(aG_\Omega(a)|f'(a)|^2+\frac{b^2a}{G_\Omega(a)}|f(a)|^2\Bigr)\dd a.
\]
Moreover, by a direct computation we find
\[\|u|_{\p\Omg}\|_{L^2(\partial\Omega)}^2=\int_{\{\psi(x)=0\}}|u(x)|^2\dd \cH^1(x)=\int_{\partial\Omega}|f(a^\star)|^2\dd \cH^1(x)=|f(a^\star)|^2|\partial\Omega|,
\]
from which the desired bound follows.
\end{proof}
\begin{remark}\label{rem:disk}In the case of the disk,  Propositions~\ref{prop:radial},~\ref{prop:ub}, Definition~\ref{def-criti-b}
	and Remark~\ref{rem:scaling} yield
\[ \kappa_1(b,4\pi)=|\partial\cB|\lambda(b,\cB),\qquad\text{for}\,\, 0<b<\frac{\pi b_\star}{|\cB|}.\]
\end{remark}
\subsection{Auxiliary problem}
Let us consider two measurable bounded real-valued functions  $G_0$ and $G_1$ on $[0,a^\star]$ that satisfy  pointwise
\[ 4\pi \leq G_0(a)<G_1(a)\quad\mbox{a.e.}\]
Put
\[\delta G=G_1-G_0,\quad G_z=(1-z)G_0+zG_1,\quad \kappa(z)=\kappa_1(b,G_z)\quad(0\leq z\leq 1), \]
here $\kp_1(b,G_z)$ is defined as in~\eqref{eq:kappa} with $G_\Omg$ replaced by $G_z$.
Let us introduce the space\footnote{Note that $4\pi\leq G_0\leq G_z\leq G_1$ a.e. and by boundedness of $G_z$, the space
$\mathcal F$ coincides with the space in \eqref{key} with $G_\Omg$ replaced by $G_z$.} 
\begin{equation}\label{eq:def-space-F}
\cF := \big\{f\in L^2(0,a^\star)\colon f'\in L^2((0,a^\star);a\dd a)\big\}.
\end{equation}
Our aim in this subsection is to study the quantity
\begin{equation}\label{eq:kappa*}
\kappa(z) := \inf_{\begin{smallmatrix} f\in\cF\\ f(a^\star) \ne 0\end{smallmatrix}}\frac{\displaystyle \int_0^{a^\star}\left( a G_z(a)|f'(a)|^2 + \frac{b^2a}{G_z(a)}|f(a)|^2\right)\dd a}{|f(a^\star)|^2}
\end{equation}
and its dependence on the parameter $z$.
\begin{prop}\label{prop:der-kappa}
	For every $z\in[0,1]$,  
	we have
	\[
	\kappa'(z)=
	\int_0^{a^\star}
	\Bigl(|Y_z(a)|^2-b^2a^2|X_z(a)|^2 \Bigr)\frac{\delta G(a)}{a|G_z(a)|^2}\dd a,\]
	where
	\[ 
	Y_z(a)=aG_z(a) f'_z(a),\qquad X_z(a)=f_z(a),\]
	and $f_z\in\cF$ with $(aG_z f_z')'\in L^2(0,a^\star)$ is the unique solution of the system
	\begin{equation}\label{eq:fzsystem}
	\begin{cases}
		-\bigl(aG_zf'_z\bigr)'+\frac{b^2a}{G_z}f_z=0,\qquad\mbox{ on }[0,a^\star],\\
		\lim_{a\arr0}aG_z(a)f'_z(a)=0,\\
		a^\star G_z(a^\star) f'_z(a^\star)=\kappa(z) f_z(a^\star) ,\\
		f_z(a^\star)=1.
	\end{cases}
	\end{equation}
   Moreover, $f_z$ is locally absolutely continuous on $(0,a^\star]$ and $f_z(a)\not=0$ for all $a\in(0,a^\star)$.
\end{prop}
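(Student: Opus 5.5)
We split the argument into three steps: existence and properties of the minimiser $f_z$, non-vanishing of $f_z$, and differentiability of $\kappa$ via a Hellmann--Feynman type argument.

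\emph{Step 1: the minimiser.} Fix $z\in[0,1]$. Since $4\pi\le G_z\le \|G_1\|_\infty$, the numerator in~\eqref{eq:kappa*} is comparable, uniformly in $z$, to $\int_0^{a^\star}a(|f'|^2+|f|^2)\,\dd a$. This is the $H^1$-norm of the radial function $x\mapsto f(\pi|x|^2)$ on a disk of area $a^\star$. The trace map $f\mapsto f(a^\star)$ is then compact on $\cF$, exactly as in Appendix~\ref{app:gs-1D}, so the infimum is attained. Replacing $f$ by $|f|$ does not increase the quotient, so there is a nonnegative minimiser, normalised by $f_z(a^\star)=1$.

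The Euler--Lagrange equation in weak form, tested with $C^\infty_0(0,a^\star)$, gives $(aG_zf_z')'=\frac{b^2a}{G_z}f_z\in L^2$. Hence $aG_zf_z'$ is absolutely continuous on $[0,a^\star]$ and $f_z'=(aG_z)^{-1}\cdot(\text{AC})$, which makes $f_z$ locally absolutely continuous on $(0,a^\star]$. Testing with general $g\in\cF$ and integrating by parts yields the natural conditions $\lim_{a\to0}aG_zf_z'=0$ (the boundary term at $0$ must vanish for all $g$, including those with $g(0)\neq0$) and $a^\star G_z(a^\star)f_z'(a^\star)=\kappa(z)$.

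\emph{Step 2: non-vanishing and uniqueness.} For positivity, the equation gives $(aG_zf_z')'\ge0$ when $f_z\ge0$, and the flux vanishes at $0$. Hence $aG_zf_z'=\int_0^a \frac{b^2s}{G_z}f_z\,\dd s\ge0$, so $f_z$ is nondecreasing. If $f_z(a_0)=0$ for some $a_0>0$, then $f_z\equiv0$ on $(0,a_0]$. A Grönwall argument on the first-order system for $(X,Y)=(f_z,aG_zf_z')$ on $[a_0,a^\star]$, whose coefficients are bounded there, then forces $f_z\equiv0$, contradicting $f_z(a^\star)=1$. So $f_z>0$ on $(0,a^\star]$.

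For uniqueness, note that any minimiser $g$ has $|g|$ as a minimiser, and $|g|$ is therefore positive. Two solutions of the first-order system with zero flux at $0$ are proportional: their Wronskian $X_1Y_2-X_2Y_1$ is constant and tends to $0$ as $a\to0$, using boundedness of $f$ near $0$ from monotonicity. The normalisation $f_z(a^\star)=1$ then fixes the solution.

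\emph{Step 3: the derivative.} Write $Q_z[f]$ for the numerator. Since $\partial_z G_z=\delta G$,
\[\partial_zQ_z[f]=\int_0^{a^\star}\Big(a|f'|^2-\frac{b^2a}{G_z^2}|f|^2\Big)\delta G\,\dd a=\int_0^{a^\star}\big(|Y|^2-b^2a^2|X|^2\big)\frac{\delta G}{aG_z^2}\,\dd a.\]
Using $f_z$ as a trial function at $z+h$ gives
\[\kappa(z+h)\le Q_{z+h}[f_z]=\kappa(z)+h\,\partial_zQ_z[f_z]+O(h^2).\]
The remainder is uniform because $G_z$ is affine in $z$ and bounded below. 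Symmetrically, using $f_{z+h}$ as a trial function at $z$ gives
\[\kappa(z)\le \kappa(z+h)-h\,\partial_zQ_{z+h}[f_{z+h}]+O(h^2).\]
Combining the two bounds shows that $\kappa$ is differentiable with the stated formula, provided $\partial_zQ_{z+h}[f_{z+h}]\to\partial_zQ_z[f_z]$.

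This continuity is the main obstacle. We plan to show that $f_{z+h}\to f_z$ in the weighted norm. The family is bounded in $\cF$, uniformly, since $Q_{z+h}[f_{z+h}]=\kappa(z+h)$ is bounded by the first inequality. Weak compactness together with compactness of the trace gives a subsequence converging weakly to a normalised minimiser for $z$, and by uniqueness this limit is $f_z$. Convergence of the quotient values, $\kappa(z+h)\to\kappa(z)$, then upgrades weak convergence to strong convergence in the energy norm. Since $\delta G/G_z^2$ is bounded, the derivative formula passes to the limit. At the endpoints $z=0,1$ the same argument gives one-sided derivatives.
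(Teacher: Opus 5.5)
Your proof is correct, but it takes a different route from the paper.

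\textbf{The paper's route.} The paper never minimises the Steklov quotient directly. Instead it:
\begin{itemize}
\item embeds the problem into the two-parameter family of Robin Sturm--Liouville operators $\sfH_{z,\kp}$ on $L^2(0,a^\star)$;
\item uses Kato's holomorphic families of type (a) to get analyticity of the lowest eigenvalue $\mu(z,\kp)$, together with the Hellmann--Feynman formulas for $\p_z\mu$ and $\p_\kp\mu=-|f_{z,\kp}(a^\star)|^2<0$;
\item identifies $\kp(z)$ as the unique zero of $\mu(z,\cdot)$ and gets $\kp'(z)$ from the implicit function theorem;
\item proves non-vanishing by an energy identity on $(0,a_0)$ plus Carath\'eodory uniqueness, with no sign information.
\end{itemize}

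\textbf{Your route.} You minimise the Steklov quotient by the direct method and differentiate via an envelope (Danskin-type) argument with two-sided trial-function bounds. You close that argument with strong continuity of $z\mapsto f_z$ in the energy norm, obtained from compactness plus uniqueness. Non-vanishing comes from the $|f|$ trick and the sign of the flux $aG_zf_z'$.

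\textbf{What each buys.} The paper gets real-analyticity of $\kp$ essentially for free. The price is reliance on the operator-theoretic description of $\sfH_{z,\kp}$ adapted from \cite[Appendix C]{CLPS}, on simplicity of its ground state, and on the separate identification $\kp(z)=\hat\kp(z)$. Your argument is self-contained and elementary. Since $z\mapsto f_z$ is continuous, $\kp'(z)=\p_zQ_z[f_z]$ is continuous, so $\kp\in C^1([0,1])$, which is all the proof of Theorem~\ref{prop:iso-DN} needs. You also obtain up front the monotonicity of $X_z$ that the paper derives in Step~1 of Proposition~\ref{prop:sign-der}.

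\textbf{Two small points to tidy.}
\begin{itemize}
\item The energy is only \emph{equivalent} to the $H^1$-norm of $f(\pi|x|^2)$, not equal to it. The $L^2$-part of that norm is $\int_0^{a^\star}|f|^2\dd a$, not $\int_0^{a^\star} a|f|^2\dd a$. The equivalence follows from a logarithmic Hardy-type bound like the one in the proof of Theorem~\ref{prop:iso-DN}.
\item The uniqueness asserted in the statement is among solutions of the system, not among minimisers, so your Wronskian argument would need a priori control of a general solution near $0$. The simplest fix: the difference of two normalised solutions vanishes at $a^\star$ together with its flux (by the Robin condition). It is therefore identically zero by ODE uniqueness on each $[\varepsilon,a^\star]$, where the coefficients are bounded.
\end{itemize}
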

\begin{proof}
	Consider the family of closed, densely defined, symmetric and semibounded quadratic forms
	\[
	\frh_{z,\kp}[f] := \int_0^{a^\star}\left(
	a G_z(a)|f'(a)|^2 + \frac{b^2a}{G_z(a)}|f(a)|^2\right)\dd a-\kp|f(a^\star)|^2,\qquad \dom\frh_{z,\kp} := \cF,
	\]
	in the Hilbert space $L^2(0,a^\star)$;
	here $\kp\in\dR$ is the Robin parameter. The closedness of the form $\frh_{z,\kp}$ follows from the fact that the norm associated to the from $\frh_{z,\kp}$ can be easily shown to be equivalent to the natural norm of the space $\cF$ specified in~\cite[Lemma C.1]{CLPS}.
	A unique self-adjoint operator $\sfH_{z,\kp}$ in the Hilbert space $L^2(0,a^\star)$ is associated to the form $\frh_{z,\kp}$ by the first representation theorem~\cite[\S VI.2, Theorem 2.1]{Kato}.
	This operator can be viewed as
	the self-adjoint Sturm-Liouville operator
	associated with the differential expression
	\[-\frac{\dd}{\dd a}\left( aG_z 
	\frac{\dd}{\dd a}\right) + \frac{b^2a}{G_z}
	\]
	acting in the Hilbert space $L^2(0,a^\star)$	
	subject to the Robin boundary condition at the endpoint $a = a_\star$
	and the Neumann-type boundary condition at the endpoint $a = 0$.
	More precisely, adapting the analysis in~\cite[Appendix C]{CLPS} to our case, we infer that the action and the domain of $\sfH_{z,\kp}$
	can be characterised as
	\begin{equation}\label{key*}
	\begin{aligned}
		\sfH_{z,\kp}f &:= -(a G_zf')'+\frac{b^2a}{G_z}f,\\ 
        \dom\sfH_{z,\kp} &:= \Big\{f\in \cF\colon
		f, aG_zf' \in {\rm AC}(0,a^\star),
		(aG_z f')'\in L^2(0,a^\star), \\ &\qquad \qquad\qquad a^\star G_z(a^\star)f'(a^\star) = \kp f(a^\star),
		\lim_{a\arr0^+} a G_z(a) f'(a) = 0\Big\}.
	\end{aligned}	
	\end{equation}
	Since, by~\cite[Lemma C.1]{CLPS}, the space $\cF$ is compactly embedded into $L^2(0,a^\star)$, the operator $\sfH_{z,\kp}$ has purely discrete spectrum
	and its lowest eigenvalue admits the following variational characterisation		
	\begin{equation}\label{eq:Robin_var_char}
		\mu(z,\kp) := \inf_{f\in\cF\sm\{0\}}
		\frac{\displaystyle \int_0^{a^\star}\left(
			a G_z(a)|f'(a)|^2 + \frac{b^2a}{G_z(a)}|f(a)|^2\right)\dd a-\kp|f(a^\star)|^2}{\displaystyle \int_0^{a^\star}|f(a)|^2\dd a}.
	\end{equation}
 	It is straightforward to see that the lowest eigenvalue of $\sfH_{z,\kp}$ is simple and let us also denote by $f_{z,\kp}\in\cF$ the $L^2$-normalized non-negative eigenfunction of $\sfH_{z,\kp}$ corresponding to its lowest eigenvalue.
	Clearly, the function $f_{z,\kp}$ satisfies
	$f_{z,\kp}, (aG_zf_{z,\kp}')'\in L^2(0,a^\star)$ and is the solution of the system
	\begin{equation}\label{eq:fzkpsystem}
		\begin{cases}
			-(a G_z f_{z,\kp}')' + \frac{b^2a}{G_z} f_{z,\kp} = \mu(z,\kp)f_{z,\kp},\qquad\text{in}~~~(0,a^\star),\\[0.3ex]
			\lim_{a\arr 0} a G_z(a)f_{z,\kp}'(a) = 0,\\[0.3ex]
			a^\star G_z(a^\star) f_{z,\kp}'(a^\star) = \kp f_{z,\kp}(a^\star).
		\end{cases}	
	\end{equation}
	In particular, we infer that $f_{z,\kp}(a^\star)\ne 0$ as 
	otherwise it would hold $f_{z,\kp}\equiv 0$ by the unique continuation for the ordinary differential equation satisfied by $f_{z,\kp}$. 
	Since the family of the quadratic forms $\frh_{z,\kp}$
	in the Hilbert space $L^2(0,a^\star)$ forms a holomorphic family of the type~(a) in the sense of Kato~\cite[\S VII.4.1]{Kato} with respect to both parameters $z$ and $\kp$, we infer by~\cite[\S VII.4.6]{Kato} that the eigenvalue $\mu(z,\kp)$ is analytic in both arguments $z$ and $\kp$.
	Using the formula~\cite[Eq.~(4.56)]{Kato} we find that
	\begin{equation}\label{eq:muderivatives}
	\begin{aligned}
		\frac{\p\mu}{\p z} (z,\kp) &= \int_0^{a^\star}\left(a\dl G(a)|f'_{z,\kp}(a)|^2 - \frac{b^2a\dl G(a)}{G_z(a)^2}|f_{z,\kp}(a)|^2\right)\dd a,\\
		\frac{\p\mu}{\p \kp} (z,\kp) &= -|f_{z,\kp}(a^\star)|^2.
	\end{aligned}	 
	\end{equation}
	For a fixed, $z\in[0,1]$ the continuous function $\dR\ni\kp\mapsto\mu(z,\kp)$ is (strictly) decreasing in $\kp$.
	Note also that $\mu(z,0) > 0$ and $\lim_{\kp\arr+\infty}\mu(z,\kp) = -\infty$. Hence, we conclude that there exists a unique value $\hat\kp(z) > 0$ such
	that $\mu(z,\hat\kp(z)) = 0$. In a natural way $\kp(z)$ defined in~\eqref{eq:kappa*} is equal to $\hat\kp(z)$ for all $z\in[0,1]$. Indeed, using $f_{z,\hat\kp(z)}$ as the trial function in~\eqref{eq:kappa*} we get that $\kp(z)\le \hat\kp(z)$. If the strict inequality $\kp(z) < \hat\kp(z)$ would hold then we would be able to find for any $\eps > 0$ a function $g_{\eps}\in\cF$ such that the Rayleigh quotient in~\eqref{eq:kappa*} evaluated on this function $g_{\eps}$ is smaller that $\kp(z) + \eps$. Using this function $g_{\eps}$ with sufficiently small $\eps > 0$ as the trial function in the variational characterisation of $\mu(z,\hat\kp(z))$ we would get that $\mu(z,\hat\kp(z)) < 0$ leading to a contradiction.

	Applying the implicit function theorem to the equation $\mu(z,\kp(z)) = 0$ and using the expressions~\eqref{eq:muderivatives} we conclude that $\kp(z)$ is differentiable and its derivative is given by
 	\[
 		\kp'(z) = 
 		\frac{\displaystyle\int_0^{a^\star}\left(a\dl G(a)|f'_{z,\kp(z)}(a)|^2 - \frac{b^2a\dl G(a)}{G_z(a)^2}|f_{z,\kp(z)}(a)|^2\right)\dd a}{|f_{z,\kp(z)}(a^\star)|^2}.
 	\]
 	Setting $f_z := \frac{f_{z,\kp(z)}}{f_{z,\kp(z)}(a^\star)}$,
 	which in view of~\eqref{eq:fzkpsystem} satisfies~\eqref{eq:fzsystem},
 	we get the expression for $\kp'(z)$ as in the formulation of the proposition. Finally, $f_{z}$ never vanishes on $(0,a^\star)$ since if $f_{z}(a_0)=0$ with  $a_0\in(0,a^\star)$,  then taking the inner product 
    of $-\bigl(aG_zf'_z\bigr)'+\frac{b^2a}{G_z}f_z$ and $f_z$ in $L^2(0,a_0)$, we get after integrating by parts
    \[\int_0^{a_0}\Bigl(aG_z(a)|f'_z(a)|^2+\frac{b^2a}{G_z(a)}|f_z(a)|^2\Bigr)\dd a=0,\]
    which yields that $f_z\equiv0$ on $(0,a_0)$, and by the unique continuation for the ordinary differential equation satisfied by $f_z$, we get $f_z\equiv0$ on $(0,a^\star)$. 
    More precisely, arguing as in \cite[Appendix C]{CLPS}, $X_z(a)=f_z(a)$ and $Y_z(a)=aG_z(a)f_z'(a)$ are absolutely continuous on $(a_0/2,a^\star]$, and $\Psi(a):=(X_z(a),Y_z(a))^\top$ satisfies 
    \[\begin{cases}
        \Psi'=\mathbf F(a,\Psi)&\mbox{on }(0,a^\star)\\
        \Psi(a_0)=0,
    \end{cases}\]
    where 
    \[\mathbf F(a,\Psi)=\begin{pmatrix}0&\frac{1}{aG_z(a)}\\
    \frac{b^2a}{G_z(a)}&0\end{pmatrix}\Psi\]
    is Lipschitz in $\Psi$ uniformly on $(a_0/2,a^\star]$. By Carath\'eodory's existence and uniqueness theorem (see \cite[Theorem~5.3, p. 30]{Ha}), we get that $\Psi\equiv 0$ on $[a_0,a^\star]$. 
\end{proof}
In the next proposition we show that
 the integrand in the formula 
for $\kp'(z)$ in Proposition~\ref{prop:der-kappa} is pointwise negative. In the proof of this proposition we use the method inspired by the proof of~\cite[Proposition 3.8]{CLPS}.
\begin{prop}
	\label{prop:sign-der}
	Let $\kp(z)$ be defined as in~\eqref{eq:kappa*} and $Y_z,X_z$ be defined as in Proposition~\ref{prop:der-kappa}.
	Then it holds that
	\[|Y_z(a)|^2-b^2a^2|X_z(a)|^2<0\qquad\mbox{ for all } a\in(0,a^\star).\]
\end{prop}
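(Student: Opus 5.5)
We will prove the strict inequality $|Y_z|^2 < b^2a^2|X_z|^2$ on $(0,a^\star)$ by a first-order (Riccati-type) comparison argument, in the spirit of \cite[Proposition 3.8]{CLPS}.

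\emph{Reduction to real, positive data.} By uniqueness of the solution to~\eqref{eq:fzsystem} and the real coefficients, $f_z$ is real-valued. Since $f_z$ does not vanish on $(0,a^\star)$ and $f_z(a^\star)=1$, we have $X_z>0$ on $(0,a^\star]$.

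\emph{The first-order system.} The equation gives $Y_z' = \frac{b^2a}{G_z}X_z$ and $X_z' = \frac{Y_z}{aG_z}$, with $Y_z(0^+)=0$. Hence $Y_z' > 0$, so $Y_z$ is strictly increasing from $0$, and $Y_z>0$ on $(0,a^\star]$. We therefore only need $Y_z < baX_z$, i.e.\ $W(a) := baX_z(a)-Y_z(a) > 0$ on $(0,a^\star)$.

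\emph{The differential inequality.} Differentiating,
\[
W' = bX_z + \frac{bY_z}{G_z} - \frac{b^2aX_z}{G_z} = bX_z\Bigl(1-\frac{ba}{G_z}\Bigr) + \frac{bY_z}{G_z}.
\]
Add and subtract $\frac{b}{G_z}\cdot baX_z$ to bring in $W$:
\[
W' = -\frac{b}{G_z}W + bX_z\Bigl(1-\frac{2ba}{G_z}\Bigr).
\]
Since $G_z\ge 4\pi$, we have $1-\frac{2ba}{G_z} \ge 1-\frac{ba}{2\pi}$. This is positive for all $a<a^\star$ provided $ba^\star<2\pi$. That is a moderate-field assumption of the type $b|\Omega| < \pi b_\star$ from the setting, and it has to be available here. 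The forcing term $bX_z(1-2ba/G_z)$ is then strictly positive.

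\emph{Integrating factor.} Let $E(a)=\exp\bigl(\int_{a_1}^a b/G_z\bigr)$. Then $(EW)' > 0$. Near $0$ we have $W(a)\approx baX_z(0^+) - Y_z(a)$, and $Y_z(a)=\int_0^a \frac{b^2s}{G_z}X_z\,\dd s = O(a^2)$, so $W(a)>0$ for small $a$ and $EW\to 0$ as $a\to0^+$. Boundedness of $X_z$ near $0$ must be justified; since $X_z' = Y_z/(aG_z) = O(1)$, $X_z$ is in fact Lipschitz there. It follows that $EW>0$ on $(0,a^\star]$, which gives the claim.

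\emph{Main obstacle.} The constraint in the sign of $1-2ba/G_z$ is where the magnetic threshold enters. Without an explicit bound on $ba^\star$ this argument fails, so the main work is to identify exactly which hypothesis on $b$ is in force and to check that $G_z\ge 4\pi$ suffices. If the required bound is weaker than $ba^\star<2\pi$, I would instead run a comparison argument against the disk case $G\equiv 4\pi$, using the ratio $Y/(aX)$ and its Riccati equation
\[
\rho' = \frac{b^2}{G_z} - \frac{\rho}{a} - \frac{\rho^2}{G_z}, \qquad \rho := \frac{Y_z}{aX_z},
\]
and show $\rho<b$ through a barrier argument at the first touching point. At a point where $\rho=b$, the equation gives $\rho' = -b/a<0$, so $\rho$ cannot cross $b$ from below. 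Together with $\rho(0^+)=0$, this barrier argument may in fact work with no restriction on $b$, and I would try it first.
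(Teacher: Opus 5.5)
Your primary integrating-factor route fails as written because of an algebra slip, and your fallback barrier argument is correct and is essentially the paper's proof.

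\textbf{The slip in the primary route.} From $W'=bX_z+\frac{bY_z}{G_z}-\frac{b^2aX_z}{G_z}$ and $Y_z=baX_z-W$ you get $\frac{bY_z}{G_z}=\frac{b^2aX_z}{G_z}-\frac{b}{G_z}W$. Hence
\[
W'=-\frac{b}{G_z}W+bX_z ,
\]
and the factor $1-\frac{2ba}{G_z}$ should not appear. The restriction $ba^\star<2\pi$ you then impose is not available. The proposition concerns arbitrary $b>0$ and arbitrary bounded weights $4\pi\le G_0<G_1$. Even in Theorem~\ref{prop:iso-DN}, the hypothesis $b|\Omega|<\pi b_\star$, with only $b_\star\ge 1$ known, does not imply it. A sanity check confirms no restriction is needed: for $G\equiv 4\pi$ one has $X=I_0(ba/4\pi)$ and $Y=baI_1(ba/4\pi)$, so $Y<baX$ for every $b$.

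\textbf{The corrected primary route.} With the right identity, your route works for all $b>0$ and is a genuine alternative to the paper. Put $E(a)=\exp\bigl(\int_0^a b/G_z\bigr)\in[1,e^{ba^\star/4\pi}]$.
\begin{itemize}
\item $EW$ is locally absolutely continuous on $(0,a^\star]$, with $(EW)'=bEX_z>0$ a.e.
\item $EW\to 0$ as $a\to0^+$, because $0<X_z\le X_z(a^\star)=1$ ($X_z$ is increasing) and $Y_z\to 0$.
\item Hence $W(a)=E(a)^{-1}\int_0^a bE(s)X_z(s)\dd s>0$.
\end{itemize}
This gives the strict inequality directly. It needs no first-crossing argument and no lower bound on $X_z$ near $0$. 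Your step ``$W(a)\approx baX_z(0^+)$, so $W>0$ for small $a$'' tacitly assumed such a lower bound, namely $X_z(0^+)>0$, which you had not shown.

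\textbf{The fallback barrier argument.} Your barrier for $\rho=Y_z/(aX_z)$ at level $b$ is the paper's argument in a different normalisation. The paper uses $R=Y_z/X_z=a\rho$ with $R'=\frac{b^2a}{G_z}-\frac{R^2}{aG_z}$. At a first point where $R(a_1)=ba_1$ the right-hand side vanishes, which is less than $(ba)'=b$. Two details need tightening.
\begin{itemize}
\item $\rho(0^+)=0$ does not follow from $Y_z=O(a^2)$ and Lipschitz continuity of $X_z$, since that bounds $X_z$ from above, not below. The paper uses monotonicity of $X_z$ instead: $Y_z(a)=b^2\int_0^a\frac{s}{G_z}X_z\dd s\le \frac{b^2a^2}{8\pi}X_z(a)$, i.e.\ $\rho\le b^2a/(8\pi)$.
\item $G_z$ is only measurable, so $\rho'$ exists only a.e., and ``$\rho'=-b/a$ at the touching point'' must be made local. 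Take $a_1=\inf\{a\colon \rho(a)\ge b\}$. On a left neighbourhood of $a_1$ one has $\rho'=\frac{(b-\rho)(b+\rho)}{G_z}-\frac{\rho}{a}<0$ a.e. The first term tends to $0$ uniformly because $G_z\ge4\pi$, and the second tends to $-b/a_1$. Integrating gives $\rho>b$ just to the left of $a_1$, a contradiction, and this yields the strict inequality directly.
\end{itemize}
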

\begin{proof} 
	In the proof, we will frequently omit indicating dependence on $z$ to simplify the notation.
	Since, by the previous proposition, $f_z$ is a continuous non-vanishing function on $(0,a^\star]$
	satisfying $f_z(a^\star) = 1$, we get that $X_z > 0$ on $(0,a^\star)$.
	We introduce the function $R(a)=Y_z(a)/X_z(a)$.
	We will demonstrate that this function is positive on $(0,a^\star)$
	and in order to get the claim of the proposition it would suffice to show that $R(a)<ba$ for all $a\in(0,a^\star)$.
	
	\smallskip
	
	\noindent {\it Step 1.} We introduce the notation $P(a):=aG_z(a)$ and $Q(a):=a/G_z(a)$. 
	Like \cite[Proposition~3.11]{CLPS}, we deduce from~\eqref{eq:fzsystem} that the 
	functions $X_z(a)$ and $Y_z(a)$ satisfy the system
	\[
	\begin{cases}
		X' = \frac{1}{P} Y,\\
		Y' = b^2Q X,\\
		Y(a^\star) = \kp X(a^\star),\\
		\lim_{a\arr0^+}Y(a)=0,
	\end{cases}	
	\]
	where the dependence on $z$ was omitted. 
	We deduce from the equation $Y' = b^2QX$ combined with positivity of $X$ on
	the whole interval $(0,a^\star)$ that $Y$ is (strictly) increasing on $(0,a^\star)$. This monotonicity property and the boundary condition $\lim_{a\arr0^+}Y(a) = 0$ yield that $Y > 0$ on $(0,a^\star)$. Hence, from the equation $X' = \frac{1}{P}Y$ we get that $X$ is (strictly) increasing on $(0,a^\star)$ and thus it extends by continuity to the endpoints of this interval. In particular, we have $X(0)\ge 0$ and $X(a^\star) > 0$.

	The function $R$ is thus positive and continuous on $(0,a^\star]$.
	Moreover, we can estimate this function from above as	
	\begin{equation}\label{eq:Rbnd}
	\begin{aligned}
		R(a) & = \frac{b^2\displaystyle\int_0^a Q(t)X(t)\dd t}{X(a)} \le 
		b^2\int_0^a \frac{t}{4\pi}\dd t = \frac{b^2a^2}{8\pi},
	\end{aligned}	
	\end{equation}
	where we used the identity $Y' = b^2QX$ and the boundary condition $\lim_{a\arr0^+}Y(a) = 0$ in the first step
	and monotonicity of $X$ and the inequality $G_z \ge 4\pi$ in the second step. Thus, we conclude from this bound that $R$ extends by continuity to the point $a = 0$ and it holds that $R(0) = 0$.
	In particular, we infer from~\eqref{eq:Rbnd} that there exists $a_0\in (0,a^\star)$ such that $R(a) < ba$ for all $a\in (0,a_0)$.

	By a straightforward computation, $R$ solves
	the system
	\begin{equation}\label{eq:R}
	\begin{cases}
		R' = F(a,R),\\
		R(a^\star)=\kp > 0,\\
		 R(0)=0,
	\end{cases}
	\end{equation}
	where 
	\[
	F(a,Z):=b^2Q(a)-\frac1{P(a)}Z^2.
	\]
	
	\smallskip 
	
	\noindent {\it Step 2.}
	We will prove that $R(a)\leq b a$ for all $a\in(0,a^\star)$. If this were not the case, then there would exist $a\in(0,a^\star)$ such that $R(a)>b a$. Let $a_1=\inf\{ a\in (0,a^\star)\colon R(a)>b a\}$. Since
		we have already checked that $R(a) < ba$ for all sufficiently small $a >0$, we conclude $0<a_1<a^\star$, and by continuity, $R(a_1)=b a_1$ and $R(a)<ba$ for all $a\in (0,a_1)$.
	We will prove that there exists $a\in(0,a_1)$ such that $R(a)>ba$, thereby obtaining a contradiction.
	
	We write
	\[R(a)-b a=\int_a^{a_1}\bigl(b-F(\alpha,R(\alpha))\bigr)\dd\alpha. \]
	Note that we have $F(a_1,R(a_1)) =0$.
   We choose $\varepsilon>0$ sufficiently small so that for all $\alpha\in[a_1-\varepsilon,a_1)$,
	\[b-F(\alpha,R(\alpha))>0.\]
	Consequently, for $a\in[a_1-\varepsilon,a_1]$, we have
	\[R(a)-ba>0,\]
	which violates the definition of $a_1$.
	\end{proof}

\subsection{Isoperimetric inequality}

Now we are in position to formulate and prove an isoperimetric inequality for the magnetic Steklov eigenvalue on a bounded domain.

\begin{thm}
	\label{prop:iso-DN}
	Let $\Omg\subset\dR^2$ be a bounded simply-connected $C^\infty$-smooth domain. Let the constant $b_\star > 0$ be as in Definition~\ref{def-criti-b}. Suppose that $\Omega$ is not a disk and that $0<b|\Omg|<\pi b_\star$.
	Let $\cB$ be the disk of the same area as $\Omg$.
	Let $\lm(b,\Omg)$ and $\lm(b,\cB)$ be the lowest magnetic Steklov eigenvalues defined as in~\eqref{eq:def-ev}
	and associated with $\Omg$ and $\cB$, respectively.
	Then the following isoperimetric inequality holds
	\[|\p\Omg|\lambda(b,\Omega)<|\p\cB|\lambda(b,\cB).\]
	In particular, by the geometric isoperimetric inequality, $\lm(b,\Omg) < \lm(b,\cB)$ holds.
\end{thm}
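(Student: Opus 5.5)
We combine the upper bound of Proposition~\ref{prop:ub} with the monotonicity of $\kappa$ in the weight (Propositions~\ref{prop:der-kappa} and~\ref{prop:sign-der}), and then identify the value at the constant weight $4\pi$ with the disk eigenvalue via Remark~\ref{rem:disk}. Since $|\cB|=|\Omg|=a^\star$, the hypothesis $0<b|\Omg|<\pi b_\star$ is exactly what Remark~\ref{rem:disk} requires. It therefore gives $\kappa_1(b,4\pi)=|\p\cB|\lambda(b,\cB)$, where $\kappa_1(b,4\pi)$ is computed on the same interval $[0,a^\star]$. By Proposition~\ref{prop:ub} we have $|\p\Omg|\lambda(b,\Omg)\le\kappa_1(b,G_\Omg)$. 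It thus suffices to prove the strict inequality $\kappa_1(b,G_\Omg)<\kappa_1(b,4\pi)$.

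For this we use the interpolation of the auxiliary problem with $G_0\equiv 4\pi$ and $G_1=G_\Omg$. By the lemma from \cite{CLPS}, $G_\Omg\ge 4\pi$ a.e. Because $\Omg$ is not a disk, the set $E=\{G_\Omg>4\pi\}$ has positive measure. The auxiliary subsection was stated for $G_0<G_1$ a.e. and for bounded $G_1$, so neither assumption may hold here, and both need handling. For the strict pointwise inequality, I expect the computation in Proposition~\ref{prop:der-kappa} to go through verbatim when $\delta G\ge 0$ merely. The monotonicity argument only needs $\delta G\ge0$, with strictness coming from $|E|>0$.

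Boundedness is more delicate. Near $\p\Omg$, Hopf's lemma gives $|\nb\psi|\ge c>0$, so $G_\Omg$ is bounded near $a^\star$. Near $a=0$, close to the maximum of $\psi$, the function $G_\Omg$ may blow up. I would first try to avoid the issue by truncating, setting $G^{(M)}=\min\{G_\Omg,M\}$. This is bounded, at least $4\pi$, and differs from $4\pi$ on a set of positive measure once $M$ is large. Then I would show $\kappa_1(b,G_\Omg)\le \kappa_1(b,G^{(M)})$ for large $M$, or directly $\kappa_1(b,G_\Omg)\le\liminf_M\kappa_1(b,G^{(M)})$. This follows by taking the minimizer for $G_\Omg$ as a trial function, if it exists in $\cF_\Omg$; alternatively one can use monotone convergence in the Rayleigh quotient with a fixed smooth trial function. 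Strictness is not lost, because monotonicity on $[G_0,G^{(M)}]$ gives $\kappa_1(b,G^{(M)})\le\kappa_1(b,G^{(M_0)})<\kappa_1(b,4\pi)$ for $M\ge M_0$, where $M_0$ is fixed. Indeed, the interpolation between $G^{(M_0)}$ and $G^{(M)}$ also has $\delta G\ge0$.

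Given bounded weights with $\delta G\ge0$, Proposition~\ref{prop:der-kappa} gives, for every $z\in[0,1]$,
\[
\kappa'(z)=\int_0^{a^\star}\bigl(|Y_z|^2-b^2a^2|X_z|^2\bigr)\frac{\delta G}{aG_z^2}\dd a .
\]
By Proposition~\ref{prop:sign-der}, the bracket is strictly negative on $(0,a^\star)$. Hence $\kappa'(z)\le0$ for all $z$, and $\kappa'(z)<0$ when $|E|>0$. Integrating over $z\in[0,1]$, which is legitimate since $\kappa$ is differentiable on $[0,1]$, yields $\kappa(1)<\kappa(0)$. Combining this with the first paragraph gives $|\p\Omg|\lambda(b,\Omg)<|\p\cB|\lambda(b,\cB)$. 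The final claim then follows from $|\p\Omg|>|\p\cB|$, which holds by the classical isoperimetric inequality since $\Omg$ is not a disk.

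The main obstacle is the technical passage to a possibly unbounded $G_\Omg$ near $a=0$, i.e.\ justifying the truncation limit. The sign argument itself is already done in Propositions~\ref{prop:der-kappa} and~\ref{prop:sign-der}.
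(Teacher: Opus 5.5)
Your architecture matches the paper's: the upper bound of Proposition~\ref{prop:ub}, the identification $\kappa_1(b,4\pi)=|\partial\mathcal B|\lambda(b,\mathcal B)$, the truncation $G^{(M)}=\min\{G_\Omega,M\}$, and monotonicity for bounded weights with merely $\delta G\ge 0$ (which the paper also uses implicitly). The bookkeeping $\kappa_1(b,G^{(M)})\le\kappa_1(b,G^{(M_0)})<\kappa_1(b,4\pi)$ is also correct.

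The gap is in the step you yourself flag. The target inequality you state, $\kappa_1(b,G_\Omega)\le\lim_{M}\kappa_1(b,G^{(M)})$, is the right one, but the arguments you give prove the opposite inequality. Inserting a fixed trial function $f$ (smooth, or the minimizer for $G_\Omega$) into the Rayleigh quotient for $G^{(M)}$ bounds $\kappa_1(b,G^{(M)})$ from above. Monotone convergence in the kinetic term and dominated convergence in the potential term then give $\limsup_M\kappa_1(b,G^{(M)})\le\kappa_1(b,G_\Omega)$. This is the paper's \eqref{eq:kappa_ineq1}. It bounds $\kappa_1(b,G_\Omega)$ from below, so it is of no use for showing $\kappa_1(b,G_\Omega)<\kappa_1(b,4\pi)$. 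Nor can you compare quotients pointwise: increasing $G$ increases $aG|f'|^2$ but decreases $b^2a|f|^2/G$. That is precisely why monotonicity had to go through $\kappa'(z)$.

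For the needed direction you must build trial functions for the $G_\Omega$-problem out of the minimizers $g_M$ of the truncated problems. These need not belong to $\mathcal F_\Omega$, since $\int_0^{a^\star} aG_\Omega|g_M'|^2\,\mathrm{d}a$ may diverge. The paper's remedy is a compactness argument:
\begin{itemize}
\item Normalize $g_M(a^\star)=1$. Since $G^{(M)}\ge 4\pi$ and $\kappa_1(b,G^{(M)})\le\kappa_1(b,4\pi)$, the integrals $\int_0^{a^\star}a|g_M'|^2\,\mathrm{d}a$ are bounded.
\item The Hardy-type estimate $\int_0^{a^\star}|g_M|^2\,\mathrm{d}a\le 2a^\star\bigl(1+\int_0^{a^\star}a|g_M'|^2\,\mathrm{d}a\bigr)$ then makes $(g_M)$ bounded in $\mathcal F$.
\item Extract a subsequence converging weakly in $\mathcal F$ and strongly in $L^2(0,a^\star)$ to some $g_\star$. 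One has $g_\star(a^\star)=1$ by $H^1$-compactness on $(a^\star/2,a^\star)$.
\item For fixed $m$ and $M_k\ge m$, weak lower semicontinuity gives $\int_0^{a^\star} aG^{(m)}|g_\star'|^2\,\mathrm{d}a\le\liminf_k\int_0^{a^\star} aG^{(M_k)}|g_{M_k}'|^2\,\mathrm{d}a$. The potential terms converge by strong $L^2$-convergence.
\item Letting $m\to\infty$ by monotone convergence shows $g_\star\in\mathcal F_\Omega$, with $G_\Omega$-Rayleigh quotient at most $\lim_M\kappa_1(b,G^{(M)})$.
\end{itemize}
Without this lower-semicontinuity argument, or an equivalent one, your proof does not reach the strict inequality.
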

As a corollary to Theorem~\ref{prop:iso-DN}, the disk is also a maximizer under fixed perimeter.
\begin{cor}\label{corol:iso-DN}
Under the assumptions of Theorem~\ref{prop:iso-DN}, it holds
\[\lambda(b,\Omega)<\lambda(b,\cB'),\]
where $\cB'\subset\dR^2$ is the disk with the same perimeter as $\Omega$.
\end{cor}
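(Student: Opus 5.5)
The statement to prove is Corollary~\ref{corol:iso-DN}; I will deduce it from Theorem~\ref{prop:iso-DN} together with the scaling behaviour of the disk eigenvalue. Let $\cB$ be the disk with $|\cB| = |\Omg|$, of radius $R$, and let $\cB'$ be the disk with $|\p\cB'| = |\p\Omg|$, of radius $R'$. By the geometric isoperimetric inequality, $R' > R$, since $\Omg$ is not a disk.

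First, Theorem~\ref{prop:iso-DN} gives
\[
	\lm(b,\Omg) < \frac{|\p\cB|}{|\p\Omg|}\,\lm(b,\cB) = \frac{R}{R'}\,\lm(b,\cB).
\]
It therefore suffices to show that $R\,\lm(b,\cB_R) \le R'\,\lm(b,\cB_{R'})$. In other words, we need the function $r\mapsto r\,\lm(b,\cB_r)$ to be non-decreasing on the relevant range. Note that $\cB_{R'}$ may violate $b|\cB_{R'}|<\pi b_\star$, so the explicit Bessel formula is not available there. I will therefore avoid Proposition~\ref{prop:radial} for $\cB_{R'}$ and argue variationally instead.

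The key step is to use Proposition~\ref{prop:ub} for an arbitrary disk. For a disk of any radius $r$ we have $G_{\cB_r}\equiv 4\pi$, and $r\lm(b,\cB_r) = \frac{1}{2\pi}|\p\cB_r|\lm(b,\cB_r)$. I will instead compare directly through the radial characterisation~\eqref{eq:characterisation}. For every $n$, the radial Rayleigh quotient multiplied by $R$ is
\[
	\frac{\int_0^R\bigl(|f'|^2+(\tfrac nr-\tfrac{br}2)^2|f|^2\bigr)r\dd r}{|f(R)|^2}.
\]
This is the infimum of a nonnegative functional with the boundary value at $R$ normalised. Take a minimiser on $(0,R')$ and restrict it to $(0,R)$; this does not work directly because the boundary point moves.

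Instead, take a minimiser $g$ on $(0,R)$ with $g(R)=1$ and extend it to $(0,R')$ by $g(r)=1$ for $r>R$. This only adds positive energy, which goes the wrong way. The cleaner route is to take $h$ on $(0,R')$ with $h(R')=1$ optimal for $R'\lm(b,\cB_{R'})$, where $n$ is optimal for $\cB_{R'}$. Because $|h|$ is increasing, in the way shown in Step~1 of Proposition~\ref{prop:sign-der}, the restriction $h|_{(0,R)}$, normalised by $|h(R)|\le 1$, gives
\[
	R\lm(b,\cB_R)\le \frac{\text{energy on }(0,R)}{|h(R)|^2}.
\]
The main obstacle is controlling the factor $|h(R)|^{-2}$.

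As a fallback I expect to do the following. For $b|\cB'|<\pi b_\star$ the explicit formula reduces the claim to monotonicity of $x\mapsto \sqrt{x}\,I_1(x)/I_0(x)$ with $x=bR^2/4$. This holds since $I_1/I_0$ is increasing. In general I would use $R\lm(b,\cB_R)\le \kappa_1(b,4\pi)$ evaluated on the interval $(0,\pi R^2)$, and show that $\kappa_1$ on $(0,a^\star)$ with weight $G\equiv4\pi$ is increasing in $a^\star$. To do so, I would test with the minimiser on the longer interval after the substitution $a\mapsto a\,a^\star_{\rm new}/a^\star$. The term $aG|f'|^2$ is scale-invariant under this dilation, while the potential term $b^2a/G\,|f|^2$ scales by the factor $(a^\star/a^\star_{\rm new})^2<1$. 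This yields the strict monotonicity directly and completes the proof.
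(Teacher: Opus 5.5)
Your reduction is correct and matches the paper's. Since $|\p\cB|/|\p\Omg|=R/R'$, Theorem~\ref{prop:iso-DN} reduces the claim to $R\,\lm(b,\cB_R)\le R'\,\lm(b,\cB_{R'})$. By scaling, $r\,\lm(b,\cB_r)=\lm(br^2,\cB_1)$, so what you need is exactly that $\beta\mapsto\lm(\beta,\cB_1)$ is non-decreasing.

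The gap is in your proof of this monotonicity. Both the Bessel computation and your dilation argument (which is correct as far as it goes) only show that the \emph{radial} quantity increases. Call it $k(a^\star)$: this is $\kappa_1(b,4\pi)$ computed on $(0,a^\star)$, and you show it is increasing in $a^\star$. But $k(|\cB_r|)$ equals $|\p\cB_r|\lm(b,\cB_r)$ only when $\cB_r$ has a radial ground state. The hypotheses guarantee this for $\cB$ but not for $\cB'$: $b|\cB'|=b|\p\Omg|^2/(4\pi)$ can be arbitrarily large while $b|\Omg|<\pi b_\star$ (take a thin elongated $\Omg$). In that regime Proposition~\ref{prop:ub} only gives an upper bound for the larger disk, so your chain reads
\[
|\p\cB|\,\lm(b,\cB)=k(|\cB|)<k(|\cB'|)\ge|\p\cB'|\,\lm(b,\cB'),
\]
and this yields nothing. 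Hence the sentence ``this yields the strict monotonicity directly and completes the proof'' fails precisely in the case your Bessel fallback does not cover.

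You cannot repair this by running the dilation mode by mode. After rescaling to $\cB_1$, the $n$-th fibre in~\eqref{eq:characterisation} has potential $(n/\sigma-\beta\sigma/2)^2$. For $n>0$ this is \emph{decreasing} in $\beta$ wherever $\beta\sigma^2<2n$, so the fibres with $n>0$ are not monotone; this is exactly why the ground state eventually stops being radial. Monotonicity of the infimum over all $n$ is a genuinely nontrivial fact (strong diamagnetism of the disk), and the paper imports it from \cite[Theorem~1.3]{HN}.

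With that result the proof is immediate. For $t=R'/R>1$,
\[
\lm(b,\cB')=t^{-1}\lm(bt^2,\cB)\ge t^{-1}\lm(b,\cB)=\frac{|\p\cB|}{|\p\Omg|}\lm(b,\cB)>\lm(b,\Omg).
\]

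Two harmless slips: in the fallback the relevant function is $x\mapsto xI_1(x)/I_0(x)$, since $R\,\lm(b,\cB_R)=2xI_1(x)/I_0(x)$. Also, Proposition~\ref{prop:ub} gives $2\pi R\,\lm(b,\cB_R)\le\kappa_1(b,4\pi)$, not $R\,\lm(b,\cB_R)\le\kappa_1(b,4\pi)$.
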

\begin{proof} Let $t=|\partial\Omega|/\sqrt{4\pi|\Omega|}=|\partial\cB'|/|\partial\cB|$.  By the geometric isoperimetric inequality, $t>1$, and by scaling, we have $\lambda(b,\cB') =t^{-1}\lambda(bt^2,\cB)$. Thanks to \cite[Theorem 1.3]{HN}, we know that the function $\dR_+\ni\beta\mapsto\lambda(\beta,\cB)$ is increasing. Thus, 
as a consequence of Theorem~\ref{prop:iso-DN}, it holds
\[\lambda(b,\Omega)<\frac{|\p\cB|}{|\p\Omg|}\lm(b,\cB)
\le
t^{-1}\lm(bt^2,\cB) = \lm(b,\cB').\qedhere\]
\end{proof}
\begin{proof}[Proof of Theorem~\ref{prop:iso-DN}]
	We will prove that $\kappa_1(b,G_\Omega) < \kp_1(b,4\pi)$. Once this is proved, we can deduce from
	Proposition~\ref{prop:ub} combined with the assumption on $b$ and Remark~\ref{rem:disk}
	that the inequality $|\p\Omg|\lambda(b,\Omega)<|\p\cB|\lambda(b,\cB)$ holds.
	The inequality $\lambda(b,\Omega)<\lambda(b,\cB)$ follows as a consequence taking into account
	the standard geometric isoperimetric inequality $|\p\Omg| > |\p\cB|$. 
	
	\smallskip
	
	\noindent\emph{Step 1.}
	Since the function $G_\Omg$ is not necessarily bounded, we aim at reducing to bounded weight functions using an approximation procedure.
	Let us introduce the sequence of functions $G_n\colon(0,a^\star)\arr\dR_+$,  $G_n :=\min\{G_\Omega,4\pi n\}$, $n\in\N$. We will prove that
	\[\lim_{n\to\infty}\kappa_1(b,G_n)=\kappa_1(b,G_\Omega),\]
	where $\kp_1(b,G_n)$ is characterized  as in~\eqref{eq:kappa} with $G_\Omg$ replaced by $G_n$ and $\mathcal F_\Omg$ replaced by the space $\mathcal F$ defined in \eqref{eq:def-space-F}.
	We have $4\pi\leq G_1\leq G_2\leq \cdots$ and that $G_n\to G_\Omega$ pointwise.
	By Proposition~\ref{prop:der-kappa} combined with Proposition~\ref{prop:sign-der} the non-negative sequence $(\kp_1(b,G_n))_n$ is non-increasing. Thus, this sequence is convergent. 
	
	Consider $\eta>0$ and choose $f\in\cF_\Omg$ with $f(a^\star) = 1$ such that
	\[		\int_0^{a^\star}\Bigl(aG_\Omega(a)|f'(a)|^2+\frac{b^2a}{G_\Omega(a)}|f(a)|^2\Bigr)\dd a\leq \kp_1(b,G_\Omg) + \eta.
	\]
	We have
	\[\begin{gathered}
		\lim_{n\to\infty}\int_0^{a^\star}aG_n(a)|f'(a)|^2\dd a=
		\int_0^{a^\star}aG_\Omega(a)|f'(a)|^2\dd a,\\ \lim_{n\to\infty}\int_0^{a^\star}\frac{a}{G_n(a)}|f(a)|^2\dd a
		=\int_0^{a^\star}\frac{a}{G_\Omega(a)}|f(a)|^2\dd a,
	\end{gathered}\]
	by the monotone and dominated convergence, respectively. Consequently,
	\[\kappa_1(b,G_\Omega)\geq \lim_{n\to\infty}\kappa_1(b,G_n)-\eta,\]
	and by taking $\eta\to0^+$, we get
	\begin{equation}\label{eq:kappa_ineq1}
	\kappa_1(b,G_\Omega)\geq \lim_{n\to\infty}\kappa_1(b,G_n).
	\end{equation}
	For a given $n\geq 1$, let $g_n\in \cF = \{h\in L^2(0,a^\star)\colon h'\in L^2((0,a^\star);a\dd a)\}$ be a ground state 
	at which the infimum in the characterisation of
	$\kappa_1(b,G_n)$ is attained and normalized so that $g_n(a^\star)=1$. Using that the inequality $G_n\ge 4\pi$ holds pointwise for all $n\in\dN$, that $G_n$ is pointwise non-decreasing in $n$, and that $(\kp_1(b,G_n))_n$ is a non-increasing sequence, we conclude that $g_n'$ is a bounded sequence in $L^2((0,a^\star);a\dd a)$. Moreover, we find using absolute continuity of $g_n$ that
	\[
	\begin{aligned}
		\int_0^{a^\star} |g_n(a)|^2\dd a& = \int_0^{a^\star}\left| g_n(a^\star) -\int_a^{a^\star} g_n'(\hat{a})\dd \hat{a}\right|^2\dd a\\
		&\le
		2a^\star + 2\int_0^{a^\star}
		\left(\int_{a}^{a^\star}\frac{1}{\hat{a}}\dd\hat{a}\right)\left( \int_a^{a^\star}\hat{a}|g_n'(\hat{a})|^2\dd \hat{a}\right)\dd a\\
		&\le
		2a^\star + 2\left( \int_0^{a^\star}\hat{a}|g_n'(\hat{a})|^2\dd \hat{a}\right)\left(\int_0^{a^\star}
		\ln\left(\frac{a^\star}{a}\right)\dd a\right)\\
		&\le
		2a^\star\left(1+\int_0^{a^\star}\hat{a}|g_n'(\hat{a})|^2\dd \hat{a}\right).
	\end{aligned}	
	\]
	Thus, $(g_n)_n$ is a bounded sequence in $L^2(0,a^\star)$ and hence also a bounded sequence\footnote{$\mathcal F$ is endowed with the norm $\|h\|_{\mathcal F}=\|h\|_{L^2(0,a^\star)}+\|h'\|_{L^2((0,a^\star);a\dd a)}$.} in $\cF$.
	Passing to a subsequence and using compactness of the embedding of $\cF$ into $L^2(0,a^\star)$ (see~\cite[Lemma C.1]{CLPS}), we may suppose that there exists $g_\star\in\cF$ and a growing sequence $(n_k)_k$ such that
	\begin{equation}\label{eq:convergence}
	g_{n_k}\to g_\star\qquad\text{weakly in}~~ \cF~~\text{and strongly in}~~~L^2(0,a^\star),\qquad\text{as}~k\arr\infty.
	\end{equation} 
	Moreover, the equality $g_\star(a^\star) = 1$ holds since $H^1(a^\star/2,a^\star)$  is compactly embedded in $C([a^\star/2,a^\star])$ (see \cite[Theorem~8.8]{B}), and $(g_{n_k}|_{(a^\star/2,a^\star)})_k$ is bounded in $H^1(a^\star/2,a^\star)$.  
    
	Consequently, for $n_k \ge m$, we get
	\[
	\begin{aligned}	
		&\int_0^{a^\star} a G_m(a)|g_{n_k}'(a)|^2\dd a + \int_0^{a^\star}\frac{b^2a}{G_{n_k}(a)}|g_{n_k}(a)|^2\dd a\\
		&\qquad\le 
		\int_0^{a^\star} a G_{n_k}(a)|g_{n_k}'(a)|^2\dd a + \int_0^{a^\star}\frac{b^2a}{G_{n_k}(a)}|g_{n_k}(a)|^2\dd a =\kp_1(b,G_{n_k}).
	\end{aligned}
	\]
	We derive from~\eqref{eq:convergence} that
	$g_{n_k}'\arr g_\star'$ weakly in $L^2((0,a^\star);a\dd a)$.
	Hence, we infer that
	\[
		\int_0^{a^\star} a  g_{n_k}'(a)\ov{\varphi(a)}\dd a\arr \int_0^{a^\star} a g'_\star(a)\ov{\varphi(a)}\dd a,\qquad\text{as}\quad k\arr\infty,
	\]
	for any $\varphi\in L^2((0,a^\star);a\dd a)$. For $\varphi = G_mg_\star'$ we get
	\[
		\int_0^{a^\star} a G_m(a) g_{n_k}'(a)\ov{g_\star'(a)}\dd a\arr \int_0^{a^\star} a G_m(a) |g'_\star(a)|^2\dd a,\qquad\text{as}\quad k\arr\infty.
	\]
	Using  Cauchy-Schwarz inequality we derive from the above limit that
	\begin{equation}\label{eq:lim1}
	\liminf_{k\arr\infty} \int_0^{a^\star} a G_m(a)|g_{n_k}'(a)|^2\dd a \ge \int_0^{a^\star} a G_m(a)|g_\star'(a)|^2\dd a.
	\end{equation}
	Moreover, we find that
	\begin{equation}\label{eq:lim2}
	\begin{aligned}
		&\int_0^{a^\star} 
		\frac{b^2 a}{G_{n_k}(a)}|g_{n_k}(a)|^2\dd a\\
		&\qquad=
		\int_0^{a^\star} 
			\frac{b^2 a}{G_\Omg(a)}|g_\star(a)|^2\dd a+
		\int_0^{a^\star} 
				\left(\frac{b^2 a}{G_{n_k}(a)} - \frac{b^2 a}{G_\Omg(a)}\right)|g_\star(a)|^2\dd a\\
		&\qquad\qquad+
		\int_0^{a^\star} 
						\frac{b^2 a}{G_{n_k}(a)}\left(|g_{n_k}(a)|^2-|g_\star(a)|^2\right)\dd a	\arr
								\int_0^{a^\star} 
									\frac{b^2 a}{G_\Omg(a)}|g_\star(a)|^2\dd a,\quad k\arr\infty,
	\end{aligned}	
	\end{equation}
	where the second term on the right-hand side converges to zero by Lebesgue dominate convergence theorem and the third term tends to zero since $g_{n_k}$ converges to $g_\star$ in $L^2(0,a^\star)$.
	
	Using~\eqref{eq:lim1} and ~\eqref{eq:lim2} and passing to the limit $k\arr\infty$, we infer that
	\[
	\int_0^{a^\star} a G_m(a)|g_\star'(a)|^2\dd a + \int_0^{a^\star}\frac{b^2a}{G_\Omg(a)}|g_\star(a)|^2\dd a \le \lim_{n\arr\infty}\kp_1(b,G_{n}).
	\]
	Since the above inequality is true for any $m\in\dN$, we end up with $g_\star\in\cF_\Omg$ and moreover, $\kp_1(b,G_\Omg) \le \lim_{n\arr\infty}\kp_1(b,G_n)$.
	
	\noindent {\it Step 2.}
	First note that, by Step 1,  $\kp_1(b,G_\Omg)$ is the limit of a non-increasing sequence $(\kp_1(b,G_n))_n$.
	Second, a combination of Proposition~\ref{prop:der-kappa} and Proposition~\ref{prop:sign-der} yields that $\kp_1(b,G_n) < \kp_1(b,4\pi)$ for all sufficiently large $n\in\dN$, from which the desired inequality	$\kp_1(b,G_\Omg) < \kp_1(b,4\pi)$ follows.
\end{proof}

\section{Exterior domains}\label{sec:exterior}

In this section, we introduce the lowest magnetic Steklov eigenvalue on an exterior domain and prove an isoperimetric inequality for this eigenvalue under fixed perimeter constraint and additional geometric assumptions on the domain. Our analysis uses geometric properties of outer parallel curves and a comparison of moments, inspired by \cite{KL22} and classical isoperimetric arguments.

\subsection{The lowest magnetic Steklov eigenvalue}
Consider $\Omg^\ext=\mathbb R^2\setminus\overline{\Omega}$, the exterior of a bounded simply-connected $C^\infty$-smooth domain $\Omg\subset\dR^2$. We introduce the vector potential 
in the symmetric gauge
\begin{equation}\label{eq:Asymm}\bA_\circ(x)=\frac12(-x_2,x_1)^\top\end{equation}
and for every $b>0$, we introduce the space
\[X_b(\Omg^\ext)=\{u\in L^2(\Omg^\ext)\colon (-\ii\nabla-b\bA_\circ)u\in L^2(\Omg^\ext;\dC^2)\}.\]
The symmetric gauge is convenient for our subsequent analysis.
We introduce the lowest magnetic Steklov eigenvalue in $\Omega^\ext$ directly by its variational characterisation in full analogy with the case of bounded domains
\begin{equation}\label{eq:def-ev-ext}
\lambda(b,\Omg^\ext)=\inf_{\substack{u\in X_b(\Omg^\ext)\\u|_{\p\Omg}\not=0}}\frac{\|(-\ii\nabla-b\bA_\circ)u\|_{L^2(\Omg^\ext;\dC^2)}^2}{\|u|_{\p\Omg}\|_{L^2(\partial\Omg)}^2}.
\end{equation}
The quantity $\lambda(b,\Omg^\ext)$ is well defined, finite, and non-negative. As in the case of bounded domains, this eigenvalue can be identified with the lowest eigenvalue of the magnetic Dirichlet-to-Neumann map on the exterior domain $\Omg^\ext$; see~\cite[Section 2]{HN-e} and also~\cite{HKN} for details.   
We also remark that by~\cite[Proposition 2.2]{HN-e} the lowest magnetic Steklov eigenvalue $\lambda(b,\Omg^\ext)$ is not equal to zero for any $b > 0$. We remark that the exterior domain
is not simply-connected and the quantity $\lm(b,\Omg^\ext)$ depends, in general, on the choice of the gauge of the magnetic field\footnote{To fix the gauge, one would need to prescribe the  magnetic flux as in \cite[Eq. (1.4)]{HKN2}).}. Our further results are proved for the symmetric gauge~\eqref{eq:Asymm}. 

A function $u_\star\in X_b(\Omg^\ext)$ with $u_\star|_{\p\Omg} \ne 0$ at which the infimum in~\eqref{eq:def-ev-ext} is attained will be called the ground state of the magnetic Steklov problem. It is not necessary for our analysis to show that the ground
state always exists, although we expect it to be the case. With a slight abuse of terminology, we will say that $u_\star$ is a ground state for $\lm(b,\Omg^\ext)$.
The following result follows from \cite[Proposition~1.2]{HN-e}.
\begin{prop}\label{prop:HN}
    There exists $b_1>0$ such that, for $0<b<b_1$, the lowest magnetic Steklov eigenvalue in the exterior of the unit disk, $\lambda(b,\cB_1^\ext)$, has a real-valued radially symmetric ground state. 
\end{prop}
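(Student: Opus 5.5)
The proof is a separation of variables on $\cB_1^\ext$ in the symmetric gauge, followed by a perturbative comparison of the angular channels for small $b$. We take for granted the result of \cite[Proposition~1.2]{HN-e}; what follows outlines how it could be recovered directly. In polar coordinates on $\cB_1^\ext$, the gauge $\bA_\circ$ gives, exactly as in the proof of Proposition~\ref{prop:radial}, the decomposition $u(r,\tt)=\sum_{n\in\dZ}f_n(r)e^{\ii n\tt}$. The Rayleigh quotient in~\eqref{eq:def-ev-ext} then splits as
\[
\lm(b,\cB_1^\ext)=\inf_{n\in\dZ}\ell_n(b),\qquad
\ell_n(b):=\inf_{f(1)\ne0}\frac{\displaystyle\int_1^\infty\Bigl(|f'(r)|^2+\Bigl(\frac nr-\frac{br}{2}\Bigr)^2|f(r)|^2\Bigr)r\dd r}{2\pi|f(1)|^2}\cdot 2\pi,
\]
with the infimum taken over $f$ with $f, f', (\frac nr-\frac{br}2)f\in L^2((1,\infty);r\dd r)$. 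In each channel the minimiser is explicit: it is the solution of $-f''-\frac1rf'+(\frac nr-\frac{br}2)^2f=0$ that decays at infinity. Via $s=br^2/2$ this equation reduces to a Whittaker/Kummer equation, so the decaying solution is $f_n(r)=r^{|n|}e^{-br^2/4}U(\alpha_n,|n|+1,br^2/2)$ with $\alpha_n=\frac{|n|-n+1}{2}$. Consequently $\ell_n(b)=-f_n'(1)/f_n(1)$, and this minimiser may be taken real-valued.

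Existence of minimisers in each channel follows from the exterior analogue of Appendix~\ref{app:gs-1D}. The key ingredient is that the potential $(\frac nr-\frac{br}2)^2\to\infty$ as $r\to\infty$, so the weighted space embeds compactly and the trace at $r=1$ is continuous. This also shows that a channel minimiser exists for every $b>0$.

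The main step is to show that $\ell_0(b)<\ell_n(b)$ for all $n\ne0$ once $b$ is small. Here the favourable pointwise comparison used in the bounded case fails, because $(\frac nr-\frac{br}2)^2<\frac{b^2r^2}4$ near $r=\frac nb$ for $n>0$. I would therefore argue asymptotically as $b\to0$. First, $\ell_0(b)\to0$: the trial function $e^{-\eps\log r}$-type cutoffs give this, and more precisely one expects $\ell_0(b)\sim 1/\log(1/b)$. Second, for $|n|\ge1$ one has $\ell_n(b)\ge |n|-Cb$ uniformly. This follows from $\int_1^\infty(|f'|^2+\frac{n^2}{r^2}|f|^2)r\dd r\ge |n||f(1)|^2$ (the non-magnetic exterior Steklov eigenvalue in mode $n$ is $|n|$), combined with the bound $(\frac nr-\frac{br}2)^2\ge \frac{n^2}{r^2}-nb$, which gives
\[
\ell_n(b)\ge |n|-nb\,\frac{\int_1^\infty|f|^2r\dd r}{|f(1)|^2}.
\]

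The obstacle is to control the ratio $\|f\|^2_{L^2(r\dd r)}/|f(1)|^2$ for near-minimisers uniformly in $n$. A cleaner route is to bound the minimiser directly using the asymptotics of $U$, which make $\ell_n(b)\to|n|$ as $b\to0$ uniformly for $n\ge1$, using monotonicity in $n$ for large $n$. For $n<0$ the comparison is easy, since there the cross term is positive.

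Finally, once $\inf_n\ell_n=\ell_0$ holds for $b<b_1$, the radial function $f_0(|x|)$ is a ground state; it is real-valued, and it lies in $X_b(\cB_1^\ext)$ because of its Gaussian decay.
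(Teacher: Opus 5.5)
Insofar as you simply invoke \cite[Proposition~1.2]{HN-e}, your proof is the paper's, since the paper gives no argument beyond that citation. The direct derivation you outline is not yet a proof, however, because the step you call the main one is left open.

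To get $\ell_0(b)\le\ell_n(b)$ for all $n\ge1$ you need a lower bound on $\ell_n(b)$ that is uniform in $n$. Your displayed inequality needs control of $\|f_n\|^2_{L^2(r\dd r)}/|f_n(1)|^2$, which you do not supply. The fallback you propose (uniform asymptotics of $U(\frac12,n+1,\cdot)$ in $n$, monotonicity in $n$) is only asserted.

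Moreover, the intermediate claim $\ell_n(b)\ge|n|-Cb$ is false for $n=1$. With $V=\frac1r-\frac{br}2$, integration by parts gives, for compactly supported $f$ (and for the Gaussian-decaying minimiser),
\[
\int_1^\infty\bigl(|f'|^2+V^2|f|^2\bigr)r\dd r=\int_1^\infty|f'+Vf|^2\,r\dd r+\Bigl(1-\frac b2\Bigr)|f(1)|^2-b\int_1^\infty|f|^2\,r\dd r .
\]
Take the trial function $f=1/r$ on $[1,b^{-1/2}]$, cut off linearly on $[b^{-1/2},2b^{-1/2}]$. On the first interval $f'+Vf=-\frac b2$, and this choice yields $\ell_1(b)\le 1-\frac b2\log\frac1b+O(b)$. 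So for $n=1$ the ratio you wanted to control is at least of order $\log\frac1b$. This does not hurt the conclusion, since all that is needed is $\inf_{n\ge1}\ell_n(b)\ge c>0$.

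That uniform bound is elementary and needs no special functions. Fix $n\ge1$ and $0<b<1$. On $[1,\sqrt{n/b}]$ one has $br^2\le n$, hence $\frac nr-\frac{br}2\ge\frac n{2r}\ge0$. Discard the integral over $(\sqrt{n/b},\infty)$ and substitute $r=e^t$, $g(t)=f(e^t)$, $T=\frac12\log\frac nb$. This gives
\[
\int_1^\infty\Bigl(|f'|^2+\Bigl(\frac nr-\frac{br}2\Bigr)^2|f|^2\Bigr)r\dd r\ge\int_0^T\Bigl(|g'|^2+\frac{n^2}4|g|^2\Bigr)\dd t\ge\frac n2\tanh\Bigl(\frac{nT}2\Bigr)|f(1)|^2 .
\]
The last step is the explicit one-dimensional minimisation with $g(0)$ fixed, whose minimiser is proportional to $\cosh(\frac n2(T-t))$.

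Hence $\ell_n(b)\ge\frac12\tanh(\frac14\log\frac1b)$ for every $n\ge1$, and this tends to $\frac12$. On the other hand $\ell_0(b)\to0$. Explicitly, take $f=1$ on $[1,b^{-1/8}]$ and $f=2-\log r/\log(b^{-1/8})$ on $[b^{-1/8},b^{-1/4}]$; this gives $\ell_0(b)\le\frac{8}{\log(1/b)}+\frac b{16}$.

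Combine this with your bound $\ell_n(b)\ge|n|$ for $n<0$ and your existence argument in the channel $n=0$. This closes the direct proof with an explicit threshold; by monotonicity of both bounds in $b$, for instance $b_1=e^{-20}$ works.
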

Thanks to the above proposition, we may introduce the positive constant
\begin{equation}\label{eq:def-b-circ}
b_\circ\!:=\!\sup\{b'>0\colon\! \lambda(b,\cB_1^\ext)\mbox{ has a real-valued and radial ground state for all }b\!\in\!(0,b')\}.
\end{equation}

\subsection{Isoperimetric inequality}
In the case of the exterior of a bounded simply-connected domain with fixed perimeter satisfying some additional geometric assumptions, we prove that the lowest magnetic Steklov eigenvalue is maximized by the exterior of the disk
provided that the intensity of the magnetic field does not exceed a critical constant, which guarantees that the corresponding magnetic Steklov problem on the exterior of the disk has a radial and real-valued ground state.
Before we state this result we need to recall several geometric definitions.
\begin{dfn}
	We say that the domain $\Omg$ is \emph{centrally symmetric} if there exists $x_0\in\dR^2$ such that $\Omg = \{x_0-x\in\dR^2 \colon x\in \Omg-x_0\}$. In this case, we say that $\Omg$ is centrally symmetric with respect to $x_0$.
\end{dfn}
Next, we introduce the distance function to $\Omg$
\[
	\rho_{\Omg}\colon\dR^2\arr\dR_+,\qquad \rho_{\Omg}(x) := \inf_{y\in\Omg}|x-y|.
\]
This function is identically equal to zero  for all $x\in\Omg$ and positive for all $x\in\Omg^\ext$.
\begin{dfn}
	For a bounded simply-connected $C^\infty$-smooth domain $\Omg\subset\dR^2$ we introduce the outer parallel curve
	on the distance $t> 0$ to the boundary by
	\[
		\Sg_t = \Sg_t(\Omg) := \{x\in\dR^2\colon \rho_{\Omg}(x) = t\}.
	\]
\end{dfn}
By~\cite[Theorem 4.4.1]{SST}, the outer parallel curve $\Sg_t$ is at most a finite union of piecewise $C^\infty$-smooth simple closed curves for almost all $t\in\dR$. The curve $\Sg_t$ may have, in general, several connected components. However, if $\Omg$
is a convex domain, then it is easy to see that $\{x\in\dR^2\colon \rho_\Omg(x) < t\}$ is also a convex domain and thus the outer parallel curve $\Sg_t$ is connected being its boundary. By~\cite[Proposition A.1]{S01}
it holds that 
\begin{equation}\label{eq:length_bnd}
	|\Sg_t| \le |\p\Omg| +2\pi t,\qquad \text{for all}\,\,t> 0,
\end{equation}
where the equality is obviously attained for all $t>0$ when $\Omg$ is a disk. 
\begin{remark}
One can show that the equality in~\eqref{eq:length_bnd} is attained for all $t>0$  when $\Omg$ is convex. This stronger property is not needed for our purposes.
\end{remark}
\begin{remark}
	In the above discussion, we used~\cite[Theorem 4.4.1]{SST}
	and~\cite[Proposition A.1]{S01}, which are only proved for bounded domains and interior parallel curves. However, the desired properties of outer parallel for exterior domains can be still deduced from these results by applying them to the intersection of $\Omg^\ext$ with a disk of a sufficiently large radius.
\end{remark}

Now we are in position to formulate and prove our main result on optimization of the lowest magnetic Steklov eigenvalue for exterior domains. 
\begin{thm}\label{thm:iso-fixed-L}
    Let $\Omg\subset\dR^2$ be a bounded simply-connected $C^\infty$-smooth domain such that the following holds:
    \begin{itemize}
     \item [\rm (a)] $\Omg$
    is either centrally symmetric or has two (not necessarily orthogonal) axes of symmetry.
    \item [\rm (b)] The outer parallel curve $\Sg_t = \Sg_t(\Omg)$ is a simple closed curve for all $t > 0$ (this condition is automatically satisfied when $\Omg$ is convex).
    \end{itemize}
 	Let the constant $b_\circ > 0$ be as in \eqref{eq:def-b-circ}. Suppose that $\Omega$ is not a disk and that $0<|\partial\Omg|^2b<4\pi^2 b_\circ$.
    Let $\cB'$ be the disk of the same perimeter as $\Omg$.
	Let $\lm(b,\Omg^\ext)$ and $\lm(b,(\cB')^\ext)$ be the lowest magnetic Steklov eigenvalues defined as in~\eqref{eq:def-ev-ext}
	and associated with $\Omg^\ext$ and $(\cB')^\ext$, respectively.
	Then the following isoperimetric inequality holds
	\[\lambda(b,\Omega^\ext)<\lambda(b,(\cB')^\ext).\]
\end{thm}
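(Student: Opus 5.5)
The plan is to use trial functions that depend only on the distance to the boundary, built from the radial ground state of the exterior of the disk $\cB'$. First, reduce to a convenient position. Translating $\Omg$ changes $\bA_\circ$ by the gradient of a linear function. That gradient is a global gauge, so it leaves $\lm(b,\Omg^\ext)$ unchanged. Hence I may assume that the centre of symmetry (or the intersection point of the two axes of symmetry) is the origin. Set $R:=|\p\Omg|/(2\pi)$, so $\cB'=\cB_R$. The assumption $|\p\Omg|^2 b<4\pi^2 b_\circ$ is exactly $bR^2<b_\circ$. By the scaling argument of Remark~\ref{rem:scaling} adapted to exterior domains, together with~\eqref{eq:def-b-circ}, $\lm(b,\cB_R^\ext)$ then has a real-valued radial ground state $u_\circ(x)=f(|x|)$ with $f(R)\neq 0$. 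Because $f$ is real, $|(-\ii\nb-b\bA_\circ)u_\circ|^2=|\nb u_\circ|^2+\frac{b^2|x|^2}{4}|u_\circ|^2$, and therefore
\[
\lm(b,\cB_R^\ext)=\frac{\int_R^\infty\bigl(|f'(r)|^2+\frac{b^2r^2}{4}|f(r)|^2\bigr)2\pi r\,\dd r}{2\pi R\,|f(R)|^2}.
\]

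As trial function for~\eqref{eq:def-ev-ext} I take the real function $u(x):=f(R+\rho_\Omg(x))$ on $\Omg^\ext$. Since $u$ is real, the cross term again vanishes. The distance function satisfies $|\nb\rho_\Omg|=1$ a.e., so the coarea formula gives
\[
\|(-\ii\nb-b\bA_\circ)u\|^2=\int_0^\infty\Bigl(|f'(R+t)|^2|\Sg_t|+\frac{b^2}{4}|f(R+t)|^2\int_{\Sg_t}|x|^2\,\dd\cH^1(x)\Bigr)\dd t .
\]
The boundary norm is $|f(R)|^2|\p\Omg|=2\pi R|f(R)|^2$, which is the same as for the disk. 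For the first term I use~\eqref{eq:length_bnd}, namely $|\Sg_t|\le 2\pi(R+t)$. For the second term I need the moment bound
\[
\int_{\Sg_t}|x|^2\,\dd\cH^1\le\frac{|\Sg_t|^3}{4\pi^2}\le 2\pi(R+t)^3 .
\]
Assumption (b) makes $\Sg_t$ a single closed curve, so I can parametrize it by arclength, $\gamma\colon[0,\ell_t]\to\dR^2$. Assumption (a) is inherited by $\Sg_t$: central symmetry about the origin, or two axes through the origin, are preserved by taking outer parallel sets. Hence the arclength centroid of $\Sg_t$ is the origin, that is, $\int\gamma\,\dd s=0$. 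The periodic Wirtinger inequality then gives $\int|\gamma|^2\,\dd s\le(\ell_t/2\pi)^2\int|\gamma'|^2\,\dd s=\ell_t^3/(4\pi^2)$. Substituting $r=R+t$ shows that the Rayleigh quotient of $u$ is at most the expression above for $\lm(b,\cB_R^\ext)$. I also need $u\in X_b(\Omg^\ext)$; this follows from the same coarea estimates and the finiteness of the disk quantities, with $\Sg_t$ piecewise smooth for a.e.\ $t$.

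For strictness, equality would force equality in Wirtinger's inequality for a.e.\ $t$ on a set where $f(R+t)\neq0$. Such a set has positive measure, because $f(R)\ne0$ and $f$ is continuous. Equality in Wirtinger means $\Sg_t$ is a circle centred at the origin for a set of $t$ accumulating at $0$. Letting $t\to0^+$, $\p\Omg$ would then be a circle, contradicting the assumption that $\Omg$ is not a disk. I expect the main obstacle to be the moment inequality, specifically checking rigorously that $\Sg_t$ inherits the symmetry and hence has centroid at the origin, with Wirtinger applied to a merely piecewise smooth curve. The strictness argument also needs some care: the passage to the limit $t\to0^+$ and the fact that $f$ does not vanish near $R$.
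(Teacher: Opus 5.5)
Your proposal is correct and follows the paper's proof essentially step by step. You use the same trial function $f(R+\rho_\Omg(x))$ built from the real radial ground state on $(\cB')^\ext$, the coarea formula, the length bound~\eqref{eq:length_bnd}, and the moment bound $\int_{\Sg_t}|x|^2\,\dd\cH^1\le\int_{\cC_t}|x|^2\,\dd\cH^1$ obtained from the centroid lying at the origin; your Wirtinger argument is just the Hurwitz inequality the paper cites, and your gauge justification of the translation and your $t\to0^+$ strictness argument (which also makes explicit that $f$ does not vanish near $R$) merely fill in details that the paper states without proof.
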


\begin{proof}
We split the proof into two steps.

\medskip

\noindent {\it Step 1. Moments of outer parallel curves.}
Without loss of generality we can assume that
the centre of symmetry of $\Omg$ is at the origin
if $\Omg$ is centrally symmetric (respectively, the intersection of the axes of symmetry is at the origin if $\Omg$ has two axes of symmetry). We also assume that the disk $\cB'$ is centred at the origin.
 
Let  $L=|\partial\Omg|$ be the perimeter of $\Omg$. 
Analogously to~\cite[Lemma 2.5]{DKL}, it follows that the centroid $c(t) = \frac{1}{|\Sg_t|}\int_{\Sg_t} x \dd\cH^1(x)$ of $\Sg_t$
is at the origin for all $t > 0$.

Let $\mathcal C_t=\Sigma_t(\cB')$ be the circle centered at the origin and of radius $(2\pi)^{-1}L+t$. Thanks to \eqref{eq:length_bnd} and a classical result by Hurwitz \cite{Hu} (see also~\cite[Proof of Theorem 1.1]{DKL}) we have
\begin{equation}\label{eq:moment}
\int_{\Sigma_t}|x|^2\dd\cH^1(x)\leq \int_{\mathcal C_t}|x|^2\dd\cH^1(x),
\end{equation}
with strict inequality when $\Sigma_t$ is not a circle (if $\Omega$ is not a disk, then $\Sigma_t$ is not a circle).

\medskip

\noindent {\it Step 2. Construction of a trial state.}

The definition of our trial state is inspired by \cite[Proof of Theorem 4.8]{KL22}. Let ${u_\circ\in X_b((\cB')^\ext)}$ be a real-valued and radial ground state of $\lambda(b,(\cB')^{\ext})$, whose trace on $\p\cB'$ is normalized to one in $L^2(\partial\cB')$. In the exterior of $\cB'$, the distance function of $\cB'$ is $r-R'$, where $R'=L/2\pi$ is the radius of $\cB'$ and $r=|x|$ is the radial variable. Thus, we can express $u_\circ$ as
\[u_\circ(x)=\psi_\circ\bigl(\rho_{\cB'}(x)\bigr),\qquad x\in (\cB')^\ext,\]
with some $\psi_\circ\colon \dR_+\arr\dR_+$. The representation of $\psi_\circ$  given in~\cite[Section 4]{HN-e} in terms of special functions yields, in particular, that $\psi_\circ\in C^\infty([0,\infty))$.
Now we define the trial state on $\Omg^\ext$ by
\[u_\star(x)=\psi_\circ\bigl(\rho_{\Omg}(x)\bigr),\qquad x\in\Omg^\ext.\]
Using that $u_\circ$ is real-valued and the co-area formula (see \cite[Eqs. (4.4) and (4.5)]{KL22}), we have
\[
\begin{aligned}
\|(-\ii\nabla-b\bA_\circ)u_\star\|^2_{L^2(\Omg^\ext;\dC^2)}&=\|\nabla u_\star\|^2_{L^2(\Omg^\ext;\dC^2)}+\frac{b^2}{4}\int_{\Omega^\ext}|x|^2|u_\star|^2\dd x\\
&=\int_0^{\infty}|\psi_\circ'(t)|^2|\Sg_t|\dd t+\frac{b^2}{4}\int_0^{\infty}|\psi_\circ(t)|^2\int_{\Sigma_t}|x|^2\dd\cH^1(x)\dd t.
\end{aligned}\]
Using~\eqref{eq:length_bnd} and \eqref{eq:moment}, we deduce 
\begin{equation}\label{eq:gradu}\begin{aligned}
    \|(-\ii\nabla-b\bA_\circ)u_\star\|^2_{L^2(\Omg^\ext;\dC^2)}&<\int_0^{\infty}|\psi_\circ'(t)|^2|\cC_t|\dd t+\frac{b^2}{4}\int_0^{\infty}|\psi_\circ(t)|^2\int_{\mathcal C_t}|x|^2\dd\cH^1(x)\dd t\\
    &=\|(-\ii\nabla-b\bA_\circ)u_\circ\|_{L^2((\cB')^\ext;\dC^2)}^2=\lambda(b,(\cB')^{\ext}).
\end{aligned}\end{equation}
Moreover, 
\begin{equation}\label{eq:traceu}\|u_\star|_{\p\Omg}\|^2_{L^2(\partial\Omg)}=
	\int_{\p\Omg} |\psi_\circ(0)|^2\dd \cH^1(x)=\|u_\circ|_{\p\cB'}\|_{L^2(\p\cB')}^2=1.
\end{equation}
Using again the co-area formula and the inequality~\eqref{eq:length_bnd} we get
\[
	\|u_\star\|^2_{L^2(\Omg^\ext)} = \int_0^\infty|\psi_\circ(t)|^2|\Sg_t| \dd t \le \int_0^\infty|\psi_\circ(t)|^2|\cC_t| \dd t = \|u_\circ\|^2_{L^2((\cB')^\ext)} <\infty.
\]
Thus, we get that $u_\star\in X_b(\Omg^{\ext})$ and using $u_\star$ as a trial state in \eqref{eq:def-ev-ext}, we conclude from~\eqref{eq:gradu} and~\eqref{eq:traceu} that
\[\lambda(b,\Omg^\ext)<\lambda(b,(\cB')^\ext).\qedhere\]
\end{proof}

\appendix

\section{Existence of ground states}
\subsection{Existence of a ground state for the magnetic Steklov problem on a bounded domain}\label{app:gs}

Foundations on the magnetic Steklov problem can be found in \cite[Appendix A.3]{CPS}. While \cite{CPS} addresses specifically the Aharonov-Bohm potential,  the analysis carries over to our setting of  a constant magnetic field.

For the sake of completeness, we provide here a proof that the infimum in \eqref{eq:def-ev} is attained at $u_\star\in H^1(\Omega)$ which satisfies \eqref{eq:DN-gs}.
For  $\beta\in\mathbb R$ and $b\geq 0$, we introduce the lowest eigenvalue of the magnetic Robin Laplacian (see \cite[Section~2]{KL22})
\[\mu^{\rm R}(\beta,b):=\inf_{\substack{u\in H^1(\Omega)\\ u\not=0}}\frac{\|(-\ii\nabla-b\bA)u\|^2_{L^2(\Omega;\dC^2)}+\beta\|u|_{\p\Omg}\|_{L^2(\partial\Omega)}^2}{\|u\|_{L^2(\Omega)}^2}.\]
The above infimum is attained on $u_\beta\in H^1(\Omega)\sm\{0\}$ which satisfies
\[
\begin{cases}
	(-\ii\nabla-b\bA)^2u_\beta=\mu^{\rm R}(\beta,b)u_\beta,&\quad \mbox{ in }\Omega,\\
 	\partial_\nu u_\beta+\beta u_\beta=0,&\quad\mbox{ on }\partial\Omega,
\end{cases} 
 \]
where $\nu$ is the outer unit normal to $\Omega$, and the above equation is understood in the following weak sense
\[\forall\,v\in H^1(\Omega),\qquad \langle (-\ii\nabla-b\bA)u_\beta,(-\ii\nabla -b\bA)v\rangle_{L^2(\Omega;\dC^2)}+\beta\langle u_\beta,v\rangle_{L^2(\partial\Omega)}=\mu^{\rm R}(\beta,b)\langle u_\beta,v\rangle_{L^2(\Omg)}.\]
By \cite[Proposition~5.3]{HKN}, there exists a unique $\beta_\star<0$ such that $\mu^{\rm R}(\beta_\star,b)=0$, and  
we have $\lambda(b,\Omega)=-\beta_\star$, where $\lambda(b,\Omega)$ is introduced in \eqref{eq:def-ev}.

Define  $u_\star=u_{\beta_\star}$. Then $u_\star$ satisfies \eqref{eq:DN-gs} and it is also a  minimizer of \eqref{eq:def-ev}.

\subsection{Existence of a ground state for the lowest fiber problem}\label{app:gs-1D}
Let $b, R > 0$ be fixed. We demonstrate here that the infimum
\[\lambda^{\rm 1D}(b,R)=\inf_{\substack{ f\in H^1((0,R),r\dd r)\\f(R)\not=0}}\frac{\displaystyle\int_0^R\left(|f'(r)|^2+\tfrac{b^2r^2}{4}|f(r)|^2\right)r\dd r}{R|f(R)|^2}\]
is attained on $f_\star\in H^1((0,R), r\dd r)$ with $f_\star(R)\ne 0$, and that $f_\star$ satisfies \eqref{eq:Steklov_System}. 

We introduce the lowest eigenvalue 
\[\mu^{\rm 1D}(\beta,b,R) :=\inf_{\substack{f\in H^1((0,R),r\dd r)\\f\not=0}}\frac{\displaystyle\int_0^R\Bigl(|f'(r)|^2+\tfrac{b^2r^2}{4}|f(r)|^2\Bigr)r\dd r + \beta R|f(R)|^2}{\displaystyle\int_0^R|f(r)|^2r\dd r},\]
of a Sturm--Liouville-type eigenvalue problem with Robin boundary conditions; here $\beta\in\dR$ is the boundary parameter.
The infimum is attained on the unique function (up to a constant factor) $0\ne f_\beta\in C^\infty([0,R])$ (see \cite[Proposition 3.2]{KL22}) and $f_\beta$ satisfies
\[\begin{cases}
    -f''_\beta(r) -\frac{1}{r}f_\beta'(r)+\frac{b^2r^2}{4}f_\beta(r)=\mu^{\rm 1D}(\beta,b,R)f_\beta(r),&\quad 0<r<R,\\[0.3ex]
    f'_\beta(R)=-\beta f_\beta(R).
\end{cases}\]
Moreover, by using constant function as the trial function we conclude that $\mu^{\rm 1D}(\beta,b,R)<0$ for $\beta<-R^3b^2/16$. {On the other hand, we have} $\mu^{\rm 1D}(0,b,R)>0$ (for $\beta=0$). By unique continuation property for ordinary differential equations we have $f_\beta(R) \ne 0$. Thus, we have that $\mu^{\rm 1D}(\beta,b,R)$ is strictly increasing in $\beta$.
Hence, there exists a unique $\beta_\star<0$ such that $\mu^{\rm 1D}(\beta_\star,b,R)=0$. By a direct comparison argument, we get that $\lambda^{\rm 1D}(b,R)=-\beta_\star$. 

We define $f_\star=f_{\beta_\star}$. Then $f_\star$ is a minimizer of the variational characterisation of $\lambda^{\rm 1D}(b,R)$ and it satisfies \eqref{eq:Steklov_System}.

\section{Variational characterisation of the lowest eigenvalue of the magnetic Dirichlet-to-Neumann map}

The lowest eigenvalue of the magnetic Dirichlet-to-Neumann map $\Lm_b$ (associated with the form $\frl_b$ in~\eqref{eq:frlb}) can be characterised by the min-max principle as
\begin{equation}\label{eq:lm1}
	\lm_1(\Lm_b) = 
	\inf_{\varphi\in H^{1/2}(\p\Omg)\sm\{0\}}
	\frac{\|(-\ii\nb - b{\bf A})\sfS_b \varphi\|^2_{L^2(\Omg;\dC^2)}}{\|\varphi\|_{L^2(\p\Omg)}^2}.
\end{equation}
\begin{prop}\label{prop:lowest}
Let $\lm(b,\Omg)$ be as in~\eqref{eq:def-ev}
and $\lm_1(\Lm_b)$ be as in~\eqref{eq:lm1}. Then $\lm(b,\Omg) = \lm_1(\Lm_b)$.	
\end{prop}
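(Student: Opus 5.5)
We prove the two inequalities $\lm(b,\Omg)\le\lm_1(\Lm_b)$ and $\lm_1(\Lm_b)\le\lm(b,\Omg)$ separately. The first is immediate. For any $\varphi\in H^{1/2}(\p\Omg)\sm\{0\}$, the function $u=\sfS_b\varphi\in H^1(\Omg)$ has trace $u|_{\p\Omg}=\varphi\ne0$. It is therefore admissible in~\eqref{eq:def-ev}, and its Rayleigh quotient coincides with the one in~\eqref{eq:lm1}. Taking the infimum over $\varphi$ gives $\lm(b,\Omg)\le\lm_1(\Lm_b)$.

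For the reverse inequality, take any $u\in H^1(\Omg)$ with $\varphi:=u|_{\p\Omg}\ne0$; then $\varphi\in H^{1/2}(\p\Omg)$. Set $w:=u-\sfS_b\varphi$. Since $w$ has zero trace, $w\in H^1_0(\Omg)$. The key step is the orthogonality relation
\[
\langle(-\ii\nb-b\bA)\sfS_b\varphi,(-\ii\nb-b\bA)w\rangle_{L^2(\Omg;\dC^2)}=0 .
\]
It follows from the weak formulation of $(-\ii\nb-b\bA)^2\sfS_b\varphi=0$ tested against $C_0^\infty(\Omg)$ functions, extended to all of $H^1_0(\Omg)$ by density. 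This extension is legitimate because both sides are continuous in the $H^1$ norm: $\bA$ is bounded on $\ov\Omg$, since $\psi\in C^\infty(\ov\Omg)$.

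Expanding the square and using orthogonality then gives
\[
\|(-\ii\nb-b\bA)u\|^2=\|(-\ii\nb-b\bA)\sfS_b\varphi\|^2+\|(-\ii\nb-b\bA)w\|^2\ge \frl_b[\varphi].
\]
Hence the Rayleigh quotient of $u$ in~\eqref{eq:def-ev} is bounded below by that of $\varphi$ in~\eqref{eq:lm1}, which is at least $\lm_1(\Lm_b)$. Taking the infimum over $u$ yields $\lm(b,\Omg)\ge\lm_1(\Lm_b)$.

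The only point needing care is the meaning of $(-\ii\nb-b\bA)^2u=0$ in the definition of $\cN_b$. I would take it in the distributional sense, which is exactly the weak identity above for test functions in $C_0^\infty(\Omg)$; the density extension to $H^1_0(\Omg)$ is then routine. Finally, that~\eqref{eq:lm1} indeed gives the lowest eigenvalue of $\Lm_b$ is just the min-max principle for the closed form $\frl_b$ with compact resolvent, as already noted in the Remark.
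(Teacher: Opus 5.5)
Your proposal is correct and follows essentially the same route as the paper. The easy inequality comes from $\cN_b\subset H^1(\Omg)$, i.e.\ using $\sfS_b\varphi$ as a trial function. The reverse inequality comes from the decomposition $u=\sfS_b(u|_{\p\Omg})+w$ with $w\in H^1_0(\Omg)$ and the vanishing of the cross term: the paper attributes this to ``integration by parts'', and you justify it via the distributional equation and density in $H^1_0(\Omg)$.
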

\begin{proof}
Using that $\sfS_b$ is a bijection from $H^{1/2}(\p\Omg)$ onto the solution space $\cN_b$ defined in~\eqref{eq:cN} we conclude that
\[
\lm_1(\Lm_b) 
= 
\inf_{u\in \cN_b\sm\{0\}}
\frac{\|(-\ii\nb - b{\bf A})u\|^2_{L^2(\Omg;\dC^2)}}
{\|u|_{\p\Omg}\|_{L^2(\p\Omg)}^2}.
\] 
In view of the inclusion $\cN_b\subset H^1(\Omg)$, we have $\lm_1(\Lm_b) \ge \lm(b,\Omg)$. In order to show the reverse inequality. We take any $u\in H^1(\Omg)$ and decompose it as $u = v_u + w_u$ where $v_u:=\sfS_b(u|_{\p\Omg})$ and $w_u:= u-v_u\in H^1_0(\Omg)$. Hence, we find using integration by parts that
\[
\begin{aligned}
\lm(b,\Omg) &=
\inf_{\begin{smallmatrix}u\in H^1(\Omg)\\ u|_{\p\Omg}\ne 0\end{smallmatrix}}
\frac{\|(-\ii\nb - b{\bf A})(v_u+w_u)\|^2_{L^2(\Omg;\dC^2)}}
{\|v_u|_{\p\Omg}\|_{L^2(\p\Omg)}^2}\\
&=
\inf_{\begin{smallmatrix} u\in H^1(\Omg)\\ u|_{\p\Omg}\ne 0\end{smallmatrix}}
\frac{\|(-\ii\nb - b{\bf A})v_u\|^2_{L^2(\Omg;\dC^2)}+
\|(-\ii\nb - b{\bf A})w_u\|^2_{L^2(\Omg;\dC^2)}}	{\|v_u|_{\p\Omg}\|_{L^2(\p\Omg)}^2}\\
&\ge
\inf_{\begin{smallmatrix}u\in H^1(\Omg)\\ u|_{\p\Omg}\ne0\end{smallmatrix}}
\frac{\|(-\ii\nb - b{\bf A})v_u\|^2_{L^2(\Omg;\dC^2)}}
{\|v_u|_{\p\Omg}\|_{L^2(\p\Omg)}^2}= \lm_1(\Lm_b).
\end{aligned}
\] 
Finally, we conclude that $\lm_1(\Lm_b) = \lm(b,\Omg)$.
\end{proof}

\subsection*{Acknowledgement} 
The first listed author (A.K.) acknowledges the support by a start-up fund at the American University of Beirut.

\end{document}

%% file: commands.tex
\newcommand\nb{\nabla}
\newcommand{\beq}{\begin{equation} \begin{split}}
\newcommand{\eeq}{\end{split} \end{equation}}
\newcommand\Sg{\Sigma}
\newcommand\Omg{\Omega}

\newcommand\ext{{\rm ext}}

\renewcommand\and{\qquad\text{and}\qquad}

\newcommand\sm{\setminus}

\newcommand\dl{\delta}

\newcommand\Lm{\Lambda}

\newcommand{\comm}[1]{}

\def\sfH{\mathsf{H}}\def\sfS{\mathsf{S}}
\def\sfS{\mathsf{S}}

\def\bm1{\mathbbm{1}}
\def\G{\Gamma}

\def\p{\partial}

\def\arr{\rightarrow}

\def\tt{\theta}

\def\lm{\lambda}

\def\ii{{\mathsf{i}}}
\def\p{\partial}

\def\kp{\kappa}

\def\sfH{\mathsf{H}}

\def\dd{{\,\mathrm{d}}}

\def\sfS{\mathsf{S}}

\def\sfU{\mathsf{U}}

\newcounter{counter_a}
\newenvironment{myenum}{\begin{list}{{\rm(\roman{counter_a})}}%
{\usecounter{counter_a}
\setlength{\itemsep}{1.ex}\setlength{\topsep}{0.8ex}
\setlength{\leftmargin}{5ex}\setlength{\labelwidth}{5ex}}}{\end{list}}

\usepackage[latin1]{inputenc}
\usepackage[T1]{fontenc}

\numberwithin{figure}{section}
\numberwithin{equation}{section}
\theoremstyle{plain}
\newtheorem*{thm*}{Theorem}
\newtheorem{thm}{Theorem}[section]

\newtheorem{lem}[thm]{Lemma}
\newtheorem{prop}[thm]{Proposition}

\newtheorem{cor}[thm]{Corollary}

\newtheorem{dfn}[thm]{Definition}
\theoremstyle{remark}
\newtheorem{remark}[thm]{Remark}

\theoremstyle{plain}

\newcommand{\beu}{\begin{equation*}}
\newcommand{\eeu}{\end{equation*}}
\newcommand{\besu}{\begin{equation*}
\begin{aligned}}
\newcommand{\eesu}{\end{aligned}
\end{equation*}}
\newcommand{\bes}{\begin{equation}
\begin{aligned}}
\newcommand{\ees}{\end{aligned}
\end{equation}}

\newcommand\cB{\mathcal B}

\newcommand\cF{\mathcal F}

\newcommand\cH{\mathcal H}

\newcommand\cN{\mathcal N}

\newcommand\frh{\mathfrak h}

\newcommand\ov{\overline}

\DeclareMathOperator\dom{dom}

\newcommand\void[1]{}

\def\ov{\overline}
\def\eps{\varepsilon}

\def\frl{{\mathfrak l}}

      \def\dC{{\mathbb C}}

   \def\dN{{\mathbb N}}   
      \def\dR{{\mathbb R}}

   \def\dZ{{\mathbb Z}}

   \def\sfH{{\mathsf H}}

\def\sfS{{\mathsf S}}      \def\sfU{{\mathsf U}}

   \def\cB{{\mathcal B}}   \def\cC{{\mathcal C}}
      \def\cF{{\mathcal F}}
   \def\cH{{\mathcal H}}   
      
   \def\cN{{\mathcal N}}

\def\N{\mathbb{N}}

%% file: KL_DN_final.bbl
\begin{thebibliography}{99}
%
\bibitem{AFK17}
P.~R.~S. Antunes, P.~Freitas, and D.~Krej\v{c}i\v{r}\'{\i}k. Bounds and
	extremal domains for {R}obin eigenvalues with negative boundary parameter,
{\it Adv. Calc. Var.} \textbf{10} (2017), 357--380.


\bibitem{BR12}
J.~Behrndt and J.~Rohleder. 
An inverse problem of Calder\'{o}n type with partial data,
{\it Commun. Partial Differ. Equations} {\bf 37} (2012),  1141--1159.


\bibitem{BR20}
J.~Behrndt and J.~Rohleder. 
Inverse problems with partial data for elliptic operators on unbounded Lipschitz domains,
{\it Inverse Probl.} {\bf 36} (2020),  035009.


\bibitem{B} H. Brezis. 
{\it Functional analysis, Sobolev spaces and partial differential equations.}
Universitext
Springer, New York, 2011.
%
\bibitem{B01}
F. Brock. An isoperimetric inequality for eigenvalues of the Stekloff problem, 
{\it ZAMM Z. Angew. Math. Mech.} {\bf 81} (2001), 69--71.
\bibitem{BFNT21}
D. Bucur, V. Ferone, C. Nitsch, and C. Trombetti. Weinstock inequality in higher
dimensions, {\it J. Differential Geom.} {\bf 118} (2021), 1--21.
%
\bibitem{B25}
L.~Bundrock. 
Optimizing the first Robin eigenvalue in exterior domains: the ball's local maximizing property,
{\it Ann. Mat. Pura Appl. (4)} {\bf 204} (2025),  1095--1117.
%
\bibitem{BGGLP25}
L.~Bundrock, A.~Girouard, D.~Grebenkov, M.~Levitin and I.~Polterovich. The exterior Steklov problem for Euclidean domains, \texttt{arXiv:2511.09490}.
%
\bibitem{CGH} T. Chakradhar, K. Gittins, G. Habib, N. Peyerimhoff.
A note on the magnetic Steklov operator on functions.
{\it Mathematika} {\bf 71} (2025), article no. e70037.
%
\bibitem{CPS} B. Colbois, L. Provenzano, and A. Savo. Isoperimetric Inequalities for the Magnetic Neumann and Steklov Problems with Aharonov-Bohm Magnetic Potential. {\it J. Geom. Anal.} {\bf 32} (2022), article no. 285.
%
\bibitem{CLPS} B.  Colbois,  C.  L\'ena,  L. Provenzano, and  A.  Savo. 
A reverse Faber-Krahn inequality for the magnetic Laplacian.  {\it J. Math. Pures Appl. (9)} {\bf 192} (2024),  Article ID 103632.
%
\bibitem{DKL} C. Dietze, A. Kachmar, and V. Lotoreichik. 
Isoperimetric inequalities for inner parallel curves. {\it J. Spectr. Theory} {\bf 14} (2024), 1537--1562.
%
\bibitem{EO}
A.\,F.\,M.~ter Elst and E.\,M.~Ouhabaz. 
The diamagnetic inequality for the Dirichlet-to-Neumann operator, {\it Bull. Lond. Math. Soc.} {\bf 54} (2022),
 1978--1997.
%
\bibitem{E96}
L.~Erd\H{o}s.
Rayleigh-type isoperimetric inequality with a homogeneous magnetic field,
\newblock {\em Calc. Var. Partial Diff. Equ.} {\bf 4} (1996),
283--292.
%
\bibitem{FK15}
P.~Freitas and D.~Krej\v{c}i\v{r}\'{\i}k. The first {R}obin eigenvalue
with negative boundary parameter, 
{\it Adv. Math.} \textbf{280} (2015), 322--339.
%
\bibitem{GP17}
A.~Girouard and I.~Polterovich. 
Spectral geometry of the Steklov problem, in: A.~Henrot (ed.), \emph{Shape optimization and spectral theory},  De Gruyter, Berlin, (2017), 120--148.
%
\bibitem{Ha} J.K. Hale.
{\it Ordinary differential equations.}
Wiley-Interscience, New York (1969).
%
\bibitem{HN-e} B. Helffer and F. Nicoleau. On the magnetic Dirichlet to Neumann operator on the exterior of the disk -- Diamagnetism, weak-magnetic field limit and flux effects. {\it J.  Math. Pures  Appl.}
{\bf 205}, article no. 103799.
%
\bibitem{HN} B. Helffer and F. Nicoleau. On the magnetic Dirichlet to Neumann operator on the disk -- strong diamagnetism and strong magnetic field limit--. {\it J. Geom. Anal.} {\bf 35} (2025), article no. 178.
%
\bibitem{HKN} B.  Helffer,  A.  Kachmar, and  F.  Nicoleau.  Asymptotics for the magnetic Dirichlet-to-Neumann eigenvalues in general domains.  arXiv:2501.00947  (2025).
%
\bibitem{HKN2} B.  Helffer,  A.  Kachmar, and  F.  Nicoleau. Flux effects on Magnetic Laplace and Steklov eigenvalues in the exterior of a disk. 
arXiv:2508.18119 (2025). 
%
\bibitem{Hu} A. Hurwitz. Sur quelques applications g\'eom\'etriques des s\'eries de Fourier. {\it Ann. Sci. Ecole
Norm. Sup. (3)} {\bf 19} (1902), 357--408.
%
\bibitem{KL22} A. Kachmar and V. Lotoreichik. On the isoperimetric inequality for the magnetic Robin Laplacian with negative boundary parameter. {\it J. Geom. Anal.} {\bf 32} (2022), article no. 182.
%
\bibitem{KL24}
A.~Kachmar and V.~Lotoreichik.
A geometric bound on the lowest magnetic Neumann eigenvalue via the torsion function, \textit{SIAM J. Math. Anal.} {\bf 56} (2024), 5723--5745.
%
\bibitem{KLS} A. Kachmar, V. Lotoreichik, M. Sundqvist.
On the Laplace operator with a weak magnetic field in exterior domains.
{\it Anal. Math. Phys.} {\bf 15} (2025), article no. 5.
%
\bibitem{Kato}
T.~Kato. \emph{Perturbation theory for linear operators}, Berlin, Springer-Verlag, 1995.
%
\bibitem{KP02}
S. Krantz and H. Parks. \emph{The implicit function theorem. History, theory, and applications},
Birkh\"{a}user, Boston, 2002.
%
\bibitem{KL18}
D.~Krej{\v{c}}i{\v{r}}{\'{\i}}k and V.~Lotoreichik.
Optimisation of the lowest {Robin} eigenvalue in the exterior of a
compact set,
{\it J. Convex Anal.} {\bf 25} (2018), 319--337.

\bibitem{KL20} D. Krej\v{c}i\v{r}\'ik and V. Lotoreichik. Optimisation of the lowest Robin
eigenvalue in the exterior of a compact set, II: non-convex domains
and higher dimensions, {\it Potential Anal.}  {\bf 52} (2020), 601--614.

\bibitem{KrL24}
D.~Krej\v{c}i\v{r}\'ik and V.~Lotoreichik.
Optimisation and monotonicity of the second Robin eigenvalue on a planar exterior domain,
{\it Calc. Var. Partial Differ. Equ.} {\bf 63} (2024), 223.

%
\bibitem{Lee}
J.~Lee. \emph{Introduction to smooth manifolds},
Springer, New York, 2013.
%
\bibitem{LMP}
M.~Levitin, D.~Mangoubi, and I.~Polterovich. 
\emph{Topics in spectral geometry}, American Mathematical Society, Providence, 2024.
%
\bibitem{M20}
B.\,S.~Mityagin. The zero set of a real analytic function,
\textit{Math. Notes} {\bf 107}  (2020), 529--530.
%
\bibitem{NIST}
F.~W.~J. Olver, D.~W. Lozier, R.~F. Boisvert, and C.~W. Clark.
{\em NIST Handbook of Mathematical Functions},
Cambridge University Press, Cambridge,
2010.
%
\bibitem{PW61}
L.\,E.~Payne and H.\,F.~Weinberger. 
Some isoperimetric inequalities for membrane frequencies and torsional rigidity, {\it 
J. Math. Anal. Appl.} {\bf 2} (1961),  210--216.
%
\bibitem{S01}
A.~Savo.
Lower bounds for the nodal length of eigenfunctions of the Laplacian, {\it 
Ann. Global Anal. Geom.} {\bf 19} (2001),  133--151.
%
\bibitem{SST}
K.~Shiohama, T.~Shioya, and M.~Tanaka. {\it
The geometry of total curvature on complete open surfaces.}
Cambridge University Press, Cambridge, 2003.
%
\bibitem{W54}	
R.~Weinstock. Inequalities for a classical eigenvalue problem, 
{\it J. Rational Mech. Anal.} {\bf 3} (1954), 745--753.
%
\end{thebibliography}
